\numberwithin{equation}{section}
\newtheorem{theorem}{Theorem}[section]
\newtheorem{lemma}{Lemma}[section]
\DeclareMathOperator\erf{erf}
\begin{document}

\begin{frontmatter}



\title{Mathematical model of thermal phenomena of closure electrical contact with Joule heat source and nonlinear thermal coefficients}

\author[a,b]{Julieta Bollati}

\author[a,b]{Adriana C. Briozzo\corref{cor1}}

\author[c,d]{Stanislav N. Kharin}

\author[c,d,e]{Targyn A. Nauryz}

\address[a]{Depto. Matem\'atica, FCE, Univ. Austral, Paraguay 1950, Rosario, Argentina}

\address[b]{CONICET, Buenos Aires, Argentina}

\address[c]{Kazakh-British Technical University, Almaty, Kazakhstan}

\address[d]{Institute of Mathematics and Mathematical Modeling, Almaty, Kazakhstan}

\address[e]{Narxoz University, Almaty, Kazakhstan}

\cortext[cor1]{abriozzo@austral.edu.ar, (+54) 341 5223000}

\begin{abstract}
The mathematical model describing the dynamics of closed contact heating which involves vaporization of the metal when instantaneous explosion of micro-asperity occurs is presented through a Stefan type problem. The temperature field for metallic vaporization zone is introduced as heat resistance that decreases linearity. Temperature fields for liquid and solid phases of the metal described by spherical heat equations with nonlinear thermal coefficients and Joule heat source have to be determined as well as the free boundaries. Joule heating component depends on space and time variable when alternating current is considered. Solution method of the problem based on similarity variable transformation is applied which enables us to reduce the problem to an ordinary differential equations and nonlinear integral equations. Existence and uniqueness of the integral equations are proved by using fixed point theorem in Banach space. 
\end{abstract}


\begin{keyword}
 Stefan problem \sep nonlinear thermal coefficients\sep Joule heat source



\end{keyword}

\end{frontmatter}

\section{Introduction}	

The classical Stefan problem is a fundamental mathematical model that describes the solidification or melting of a material with a moving phase boundary. In the classical formulation of the Stefan problem, the heat transfer is assumed to occur solely by conduction, and the material properties are considered constant throughout the process. The problem involves the analysis of the temperature distribution and the movement of the phase boundary as a material undergoes phase change. The Stefan problem finds applications in several fields, including materials science, solidification processes, heat transfer, and phase change phenomena  \cite{AlSo1993,Ca1984,CaJa1959,Cr1984,Gu2017,Lu1991,Ru1971,Sh1967,St1889-1,Ti1959}. In particular, classical Stefan problems in electrical contacts involving the analysis of heat transfer and phase change phenomena were studied in the papers  \cite{Kh2014,KhNaMi2020,KhNa2021,SaErNaNo2018}.

However, in many practical applications, such as phase change phenomena in engineering, biology, and geophysics, the assumptions of the classical Stefan problem may not hold. Nonlinear effects, non-uniform material properties, and additional physical processes can significantly influence the solidification or melting process. These deviations from the classical assumptions give rise to the study of non-classical Stefan problems. Non-classical Stefan problem for semi-infinite and moving material with given boundary temperature and heat flux condition on the fixed face are widely studied in  \cite{BoBr2021,BrNaTa2007,BrTa2006-2,KuSiRa2020B,SiKuRa2019A,Ta2011}. Non-classical Stefan problems in electrical contacts extend the classical formulation by considering additional factors and physical processes. These include the presence of electrical currents flowing through the contact interface, nonlinear behavior of the contact materials, and the influence of non-uniform temperature and current distributions. The consideration of these factors leads to a more realistic representation of electrical contact phenomena, enabling a more accurate analysis of the system's behavior and performance. Nonlinear Stefan problems in electrical contacts arise when the material properties of the contact materials exhibit nonlinear behavior, such as temperature-dependent electrical conductivity or thermal conductivity \cite{KhNa2021-A,NaKh2022}.

   Investigation of electrical arc phenomena occurring in contacts of electrical apparatus during their opening and closure is very important for increasing the reliability, durability and fail-safety of their operation and for reducing electrical erosion. 
   The tendency to increase the speed of switching systems leads to a situation in which the experimental study of switching processes is very difficult and in some cases, it is possible to obtain information only about the final result of the process, but not about its dynamics. In this case, only mathematical modeling is able to provide the required information. 
   Electrical breakdown of the gap between approaching contacts in vacuum circuit breakers occurs due to instantaneous explosion of a micro-asperity touching the contact surface. The arc igniting in ionized metallic vapors of a micro-asperity is the main source of failure in vacuum circuit breakers.
   Mathematical modeling of the dynamics of these phenomena is very important for understanding of arc evolution, mechanism of contact erosion and welding. Some models in this direction are presented in \cite{BiKhNo2002,BuBeVeZh1978,Ka1985,Kh1991,Kh1997,Kh2017,KhNoDa2003,KhNoAm2005,KhSaNo2016,KiOmKh1977,SiKs1991,Sl2014} . However, all thermophysical and electrical coefficients in the corresponding Stefan problems were taken constant. At the same time, for many composite materials used in modern electrical apparatus construction, these coefficients depend significantly on temperature. This work is an attempt to prove the unique solvability of such a Stefan problem with temperature-dependent coefficients in order to obtain a basis for developing specific methods for solving it.

Recently in Ref. \cite{NaBr2023} a generalized model  of a complete arc formation process from arc ignition to arc quenching was  presented. Temperature field for liquid
and solid phases in semi-infinite material where the cross-section effects are assumed negligible was modeled through a two phase Stefan type problem governed by generalized heat equations. The solution of the problem was obtained in terms of similarity variable, moreover, the temperature for two phases and free boundaries which describe the location of the boiling and melting interfaces, were determined. 

We consider the mathematical model with instantaneous explosion of a microasperity. This model can be applied for the total process of arcing from the arc ignition to its extinction. According to this model, a touching microasperity of a contact explodes instantaneous due to arc ignition with the power $P$ applied to the touching point $r=0$, which can be described by the $\delta$-function

\begin{equation}\label{1}
    P\delta(r,t)=P\cdot\dfrac{\exp\big(-\frac{r^2}{4a^2t}\big)}{2a\sqrt{\pi t}},
\end{equation}
where $a$ is the thermal diffusivity. Three domains $D_0$, $D_1$ and $D_2$ should be introduced for modeling the heat transfer. The region $D_0$  $(0<r<\alpha(t))$ is the zone of metal vapors, $D_1$ $(\alpha(t)<r<\beta(t))$ and $D_2$ $(\beta(t)<r<\infty)$ are the liquid and solid zones respectively  (see Figure \ref{fig1}).
\begin{figure}[h!!]
    \centering
    \includegraphics[scale=1]{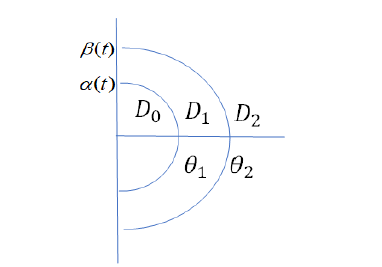}
  \caption{Spherical model of the electrical contact process. $D_0$ -- the sphere of metallic vapors, $D_1$ -- the sphere of liquid metal, $D_2$ -- the sphere of solid metal.}
   \label{fig1}
\end{figure}

The modeling of the temperature field in the vapor zone $D_0$  suggests that the temperature in this zone decreases from temperature $\theta_{ion}$ which is required for ionization of metallic vapors at the point of explosion $r=0$  to the boiling temperature $\theta_b$ on the boundary of vapor and liquid zone $r=\alpha(t)$. Taking into account that the thickness of the vapor zone is sufficiently small in comparison with the liquid zone, this zone $D_0$ can be considered as a heat resistance between arc and liquid zone, i.e. the temperature decreases linearly
\begin{equation}\label{2}
    \theta_0(r,t)=\theta_{ion}-(\theta_{ion}-\theta_b)\dfrac{r}{\alpha(t)}.
\end{equation}
The free boundary $r=\alpha(t)$ can be determined from the balance of the heat fluxes on this boundary 
\begin{equation}\label{3}
    \dfrac{P}{2a\sqrt{\pi t}}=-\lambda_b\dfrac{\partial\theta_0}{\partial r}\bigg|_{r=\alpha(t)}+L_b\gamma_b\alpha'(t).
\end{equation}

For the temperature of liquid and solid zones we have the spherical model equations with Joule heat source and heat flux condition, see \cite{Ho1981}
\begin{equation}\label{6}
    c_1(\theta_1)\gamma_1(\theta_1)\dfrac{\partial\theta_1}{\partial t}=\dfrac{1}{r^2}\dfrac{\partial}{\partial r}\bigg[\lambda_1(\theta_1)r^2\dfrac{\partial\theta_1}{\partial r}\bigg]+\rho_1(\theta_1)j_1^2,\;\;r\in D_1,\;\;t>0,
\end{equation}
\begin{equation}\label{7}
    c_2(\theta_2)\gamma_2(\theta_2)\dfrac{\partial\theta_2}{\partial t}=\dfrac{1}{r^2}\dfrac{\partial}{\partial r}\bigg[\lambda_2(\theta_2)r^2\dfrac{\partial\theta_2}{\partial r}\bigg]+\rho_2(\theta_2)j_2^2,\;\;r\in D_2,\;\;t>0,
\end{equation}
\begin{equation}\label{8}
{\color{black}    \lambda_1(\theta_1(\alpha(t),t))\dfrac{\partial\theta_1}{\partial t}\bigg|_{r=\alpha(t)}=-\dfrac{P\exp{-\big(\frac{r^2}{4a^2t}\big)}}{2a\sqrt{\pi t}}\bigg|_{r=\alpha(t)},\;\;t>0,}
\end{equation}
\begin{equation}\label{9}
    \theta_1(\beta(t),t)=\theta_m,\;\;\;\;\;t>0,
\end{equation}
\begin{equation}\label{10}
    \theta_2(\beta(t),t)=\theta_m,\;\;\;\;\;t>0,
\end{equation}
\begin{equation}\label{11}
    -\lambda_1(\theta_1(\beta(t),t))\dfrac{\partial\theta_1}{\partial r}\bigg|_{r=\beta(t)}=-\lambda_2(\theta_2(\beta(t),t))\dfrac{\partial\theta_2}{\partial r}\bigg|_{r=\beta(t)}+l_m\gamma_m\beta'(t),\;\;\;t>0,
\end{equation}
\begin{equation}\label{12}
    \theta_2(\infty,t)=0,\;\;\;\;\;t>0,
\end{equation}
\begin{equation}\label{13}
    \theta_2(r,0)=0,\;\;\alpha(0)=\beta(0)=0,
\end{equation}
where $c_i(\theta_i)$, $\gamma_i(\theta_i)$, $\lambda_i(\theta_i)$ and $\rho_i(\theta_i)j_i^2$, $i=1,2$ are the specific heat, the density, thermal conductivity and  the Joule heating, respectively, all of them dependent on temperature, $\theta_m$ is the  melting temperature, $l_m$ is a latent melting heat, $\gamma_m$ is a reference density at melting, $\alpha(t)$ is the free boundary at which power  of electricity is applied and $\beta(t)$ is the location of the liquid-solid interface which has to be found.
The purpose of this paper is to develop a mathematical model for the vaporization and melting of materials in closed electrical contacts. The aim is to understand and analyze the behavior of these materials under specific conditions, considering that the thermal coefficients depend on temperature and the influence of Joule heating. The paper focuses on solving the spherical heat equation and demonstrates that a solution can be obtained using similarity transformation, resulting in a nonlinear integral equation. In section 2 we transform the problem into a system of nonlinear integral equations. The steps involved in obtaining this system, emphasizing its relevance to the problem at hand. The significance of this transformation lies in simplifying the solution approach and facilitating further analysis. Section 3, discusses the existence and uniqueness of the solution to the system of integral equations obtained in the previous section. The paper presents mathematical arguments and proofs to support these claims, establishing the robustness and reliability of the solution method. The implications of this finding are emphasized, as it validates the feasibility of obtaining meaningful results from the model.

\section{Solution to the problem by using similarity principle}

The solution to the equation \eqref{3} is given by \begin{equation}\label{alpha}
\alpha(t)=2a\alpha_0\sqrt{t}
\end{equation}
where $\alpha_0$ solves the quadratic equation

\begin{equation}\label{4}
    \alpha_0^2-A\alpha_0+B=0
\end{equation}
where
\begin{equation}\label{5}
    A=\dfrac{P}{2a^2\sqrt{\pi}L_b\gamma_b},\;\;B=\dfrac{\lambda_b(\theta_{ion}-\theta_b)}{2a^2 L_b\gamma_b}.
\end{equation}
The equation \eqref{4} has two real roots if 
\begin{equation}\label{condP}
    P\geq2a\sqrt{2\pi}\sqrt{\lambda_b L_b\gamma_b(\theta_{ion}-\theta_b)},
\end{equation}
it means that the arc heat flux $P$ should be sufficiently large to provided the beginning of boiling. However the root $\alpha_{01}=\frac{2B}{A+\sqrt{A^{2}-4B}}$ defining the velocity of the boiling isotherm decreases when the heat flux $P$ (and also $A$) increases. This is in conflict with the physical meaning  of boiling process. On the contrary the root $\alpha_{02}=\frac{2B}{A-\sqrt{A^{2}-4B}}$ increases if the heat flux $P$ (i.e. $A$) increases, as it should be really. Therefore \eqref{4} has a unique solution given by $\alpha_{02}$ corresponding to the physical meaning at the condition \eqref{condP}.

We assume \eqref{condP} and $\alpha(t)$ given by \eqref{alpha} where \begin{equation}\label{alfacero}
    \alpha_0=\alpha_{02}=\frac{2B}{A-\sqrt{A^{2}-4B}}=\frac{2\sqrt{\pi}\lambda_b(\theta_{ion}-\theta_b)}{P-\sqrt{P^{2}-8a^{2}\pi L_b\gamma_b\lambda_b(\theta_{ion}-\theta_b)}}.
\end{equation}  By using dimensionless transformation
\begin{equation}\label{14}
    T_i(r,t)=\dfrac{\theta_i(r,t)-\theta_m}{\theta_m},\;\;(i=1,2)
\end{equation}
then problem \eqref{6}-\eqref{13} becomes
\begin{equation}\label{15}
    N_1(T_1)\dfrac{\partial T_1}{\partial t}=\dfrac{1}{r^2}\dfrac{\partial}{\partial r}\bigg[L_1(T_1)r^2\dfrac{\partial T_1}{\partial r}\bigg] + \frac{K_1(T_1)j_1^2}{\theta_m},\;\;r\in D_1,\;\; t>0,
\end{equation}
\begin{equation}\label{16}
   N_2(T_2)\dfrac{\partial T_2}{\partial t}=\dfrac{1}{r^2}\dfrac{\partial}{\partial r}\bigg[L_2(T_2)r^2\dfrac{\partial T_2}{\partial r}\bigg] + \frac{K_2(T_2)j_2^2}{\theta_m},\;\;r\in D_2,\;\; t>0,
\end{equation}
\begin{equation}\label{17}
    L_1(T_1(\alpha(t),t))\dfrac{\partial T_1}{\partial r}\bigg|_{r=\alpha(t)}=-\dfrac{P^*}{2a\sqrt{t}},\;\;\;t>0,
\end{equation}
\begin{equation}\label{18}
    T_1(\beta(t),t)=0,\;\;\;\;\;t>0,
\end{equation}
\begin{equation}\label{19}
    T_2(\beta(t),t)=0,\;\;\;\;\;t>0,
\end{equation}
\begin{equation}\label{20}
   {\color{blue} -L_1(T_1(\beta(t),t))\dfrac{\partial T_1}{\partial r}\bigg|_{r=\beta(t)}+L_2(T_2(\beta(t),t))\dfrac{\partial T_2}{\partial r}\bigg|_{r=\beta(t)}=\dfrac{l_m\gamma_m}{\theta_m}\beta'(t),\;\;t>0,}
\end{equation}
\begin{equation}\label{21}
    T_2(\infty,t)=-1,\;\;\;\;\;t>0,
\end{equation}
\begin{equation}\label{22}
    T_2(r,0)=-1,\;\;\;\alpha(0)=\beta(0)=0,
\end{equation}
where 
$$P^*=\dfrac{Pe^{-\alpha_0^2}}{\sqrt{\pi}\theta_m}, \quad N_i(T_i)=c_i(\theta_m(T_i(r,t)+1))\gamma_i(\theta_m(T_i(r,t)+1)),$$
$$L_i(T_i)=\lambda_i(\theta_m(T_i(r,t)+1)), \quad K_i(T_i)=\rho_i(\theta_m(T_i(r,t)+1)),\;\;\;i=1,2.$$
It is shown in {\color{blue} Ref. \cite{Ho1981}} that for the time arcing $t_a$ of the order $10^{-3}$ sec the sinusoidal current $I(t)=I_0\sin(\omega t)$  can be approximated by the expression
$I(t)=k\sqrt{t}$ with $ k=\tfrac{I_0\sin(\omega t_a)}{\sqrt{t_a}}$,
thus we have \begin{equation}\label{jotas}
 j_1=j_2=\dfrac{k\sqrt{t}}{2\pi r^2}
\end{equation} where $r$ is a radius of sphere. 

To solve the problem \eqref{15}-\eqref{22} we  propose the  similarity transformation
\begin{equation}\label{23}
    T_i(r,t)=u_i(\eta),\;\;\;\eta=\dfrac{r}{2a\sqrt{t}},\;\;i=1,2.
\end{equation}
From \eqref{19}, \eqref{22} and \eqref{23} the free boundary $\beta=\beta(t)$ can be represented as follows
\begin{equation}\label{24}
    \beta(t)=2a\xi\sqrt{t},
\end{equation}
where  $\xi$ is an unknown constant that has to be determined.

Then we obtain the following problem
\begin{equation}\label{25}
    [L_1^*(u_1)\eta^2u_1']'+2a^2\eta^3N_1^*(u_1)u_1'+\dfrac{k^2}{16a^2\pi^2\eta^2}K_1^*(u_1)=0,\;\;\;\alpha_0<\eta<\xi,
\end{equation}
\begin{equation}\label{26}
    [L_2^*(u_2)\eta^2u_2']'+2a^2\eta^3N_2^*(u_2)u_2'+\dfrac{k^2}{16a^2\pi^2\eta^2}K_2^*(u_2)=0,\;\;\;\xi<\eta,
\end{equation}
\begin{equation}\label{27}
    L_1^*(u_1(\alpha_0))u_1'(\alpha_0)=-P^*,
\end{equation}
\begin{equation}\label{28}
    u_1(\xi)=0,
\end{equation}
\begin{equation}\label{29}
    u_2(\xi)=0,
\end{equation}
\begin{equation}\label{30}
    L_1^*(u_1(\xi))u_1'(\xi)=L_2^*(u_2(\xi))u_2'(\xi)-M\xi,
\end{equation}
\begin{equation}\label{31}
    u_2(\infty)=-1,
\end{equation}
where $$M=\dfrac{2a^2l_m\gamma_m}{\theta_m},\quad  N_i^*(u_i)=N_i(u_i)=c_i(\theta_m(u_i+1))\gamma_i(\theta_m(u_i+1)), $$ $$ L_i^*(u_i)=L_i(u_i)=\lambda_i(\theta_m(u_i+1)),  \quad K_i^*(u_i)=\frac{K_i(u_i)}{\theta_{m}}={\rho_i(\theta_m(u_i+1))}{\theta_{m}},\;\;i=1,2.$$

By using the substitution
\begin{equation}\label{32}
    L_i^*(u_i)\eta^2 u_i'=\nu_i,\qquad i=1,2
\end{equation}
we conclude that $(\xi, u_1 ,u_2)$ is a solution to the problem \eqref{25}-\eqref{31} if and only if it  satisfies the following system of integral equations
\begin{equation}\label{33}
    u_1(\eta)=\alpha_0^2 P\Big(\chi_1(\xi,u_1,\alpha_0)-\chi_1(\eta,u_1,\alpha_0) \Big)+\Phi_1(\xi,u_1,\alpha_0)-\Phi_1(\eta,u_1,\alpha_0),\qquad \alpha_0\leq \eta\leq \xi
\end{equation}
\begin{equation}\label{34}
    u_2(\eta)=\Big(-1+\Phi_2(\infty,u_2,\xi)\Big)\frac{\chi_2(\eta,u_2,\xi)}{\chi_2(\infty,u_2,\xi)}-\Phi_2(\eta,u_2,\xi),\qquad \eta\geq \xi
\end{equation}
coupled with 
\begin{equation}\label{39}
    \frac{-1+\Phi_2(\infty,u_2,\xi)}{\chi_2(\infty,u_2,\xi)}+Q(\xi,u_1,\alpha_0)E_1(\xi,u_1,\alpha_0)=M\xi^3,\qquad \xi>\alpha_0
\end{equation}
where
\begin{equation}\label{35}
    \chi_1(\eta,u_1,\alpha_0)=\int\limits_{\alpha_0}^{\eta}\dfrac{E_1(v,u_1,\alpha_0)}{v^2L_1^*(u_1(v))}dv,\;\;\;\alpha_0\leq\eta\leq\xi,
\end{equation}
\begin{equation}\label{35a}
    \Phi_1(\eta,u_1,\alpha_0)=\dfrac{k^2}{16a^2\pi^2}\int\limits_{\alpha_0}^{\eta}\dfrac{E_1(v,u_1,\alpha_0)}{v^2L_1^*(u_1(v))}\int\limits_{\alpha_0}^{v}\dfrac{K_1^*(u_1(s))}{s^2E_1(s,u_1,\alpha_0)}dsdv,\;\;\;\alpha_0\leq\eta\leq\xi,
\end{equation}
\begin{equation}\label{37}
    E_1(\eta, u_1,\alpha_0)=\exp\Bigg(-2a^2\int\limits_{\alpha_0}^{\eta}v\dfrac{N_1^*(u_1(v))}{L_1^*(u_1(v))}dv\Bigg),\;\;\;\alpha_0\leq\eta\leq\xi,
\end{equation}
\begin{equation}\label{36}
    \chi_2(\eta,u_2,\xi)=\int\limits_{\xi}^{\eta}\dfrac{E_2(v,u_2,\xi)}{v^2L_2^*(u_2(v))}dv,\;\;\;\eta\geq\xi,
\end{equation}
\begin{equation}\label{36a}
    \Phi_2(\eta,u_2,\xi)=\dfrac{k^2}{16a^2\pi^2}\int\limits_{\xi}^{\eta}\dfrac{E_2(v,u_2,\xi)}{v^2L_2^*(u_2(v))}\int\limits_{\xi}^{v}\dfrac{K_2^*(u_2(s))}{s^2E_2(s,u_2,\xi)}dsdv,\;\;\;\eta\geq\xi,
\end{equation}

\begin{equation}\label{38}
    E_2(\eta, u_2,\xi)=\exp\Bigg(-2a^2\int\limits_{\xi}^{\eta}v\dfrac{N_2^*(u_2(v))}{L_2^*(u_2(v))}dv\Bigg),\;\;\;\eta\geq\xi,
\end{equation}
\begin{equation}\label{Q}
    Q(\xi,u_1,\alpha_0)=\alpha_0^2P^*+\dfrac{k^2 \mathcal{I}(\xi,u_1,\alpha_0)}{16a^2\pi^2},\qquad \mathcal{I}(\xi,u_1,\alpha_0)=\int\limits_{\alpha_0}^{\xi}\dfrac{K_1^*(u_1(s))}{s^2L_1^*(u_1(s))}ds, \qquad \xi>\alpha_0 .
\end{equation}


\section{Existence of solution to the problem}
In this section, we assumme hypothesis \eqref{alpha}, \eqref{condP}, \eqref{alfacero} and  \eqref{jotas} hold. For each $\xi>\alpha_0$ we will prove  the  existence and uniqueness of the solution to the integral equations \eqref{33} and \eqref{34}.

At first we consider the Banach space $(C^0[\alpha_0,\xi],||\cdot||)$  of continuous real valued functions endowed with supremum norm
$$||u_1||=\max_{\eta\in[\alpha_0,\xi]}|u_1(\eta)|$$
and the operator $V:C^0[\alpha_0,\xi]\to C^0[\alpha_0,\xi]$  defined by
\begin{equation}\label{40}
    V(u_1)(\eta)=\alpha_0^2 P\Big(\chi_1(\xi,u_1,\alpha_0)-\chi_1(\eta,u_1,\alpha_0) \Big)+\Phi_1(\xi,u_1,\alpha_0)-\Phi_1(\eta,u_1,\alpha_0).
\end{equation}

Through the Banach theorem, we will prove, for each $\xi>\alpha_0$, the existence and uniqueness of a fixed point of  the operator $V$  which will be the solution $u_1$ to equation \eqref{33}.

In order to solve equation \eqref{34} we consider the set $$\mathcal{K}=\{u_2\in C^0[\xi,\infty)/u_2(\xi)=0, u_2(\infty)=-1\}$$ which is a closed subset of the Banach space of the bounded continuous real valued functions defined on the interval $[\xi,\infty)$ with the supremum norm 
$$||u_2||=\sup_{\eta\in[\xi,\infty)}|u_2(\eta)|$$
Defining the operator $W:\mathcal{K}\to \mathcal{K}$ such that
\begin{equation}\label{41}
    W(u_2)(\eta)=\Big(-1+\Phi_2(\infty,u_2,\xi)\Big)\frac{\chi_2(\eta,u_2,\xi)}{\chi_2(\infty,u_2,\xi)}-\Phi_2(\eta,u_2,\xi),
\end{equation}
we will prove that, for each $\xi>\alpha_0$, there exist unique fixed point to the operator $W$ which is the unique solution  $u_2$ to the equation \eqref{34}.

We assume that $L^*,\;N^*$ and $K^*$ satisfy the following  inequalities
\begin{enumerate}
    \item[1)] There exists $L_m>0$ and $L_M>0$ such that
        \begin{equation}\label{44}
            L_m\leq L_1^*(u_1)\leq L_M,\;\;\forall u_1\in C^0[\alpha_0,\xi],
        \end{equation}
        \begin{equation}\label{45}
            L_m\leq L_2^*(u_2)\leq L_M,\;\;\forall u_2\in \mathcal{K}.
        \end{equation}
        There exists $\widetilde{L}_i>0,\;i=1,2$ such that
        \begin{equation}\label{46}
            ||L_1^*(u_1)-L_1^*(u_1^*)||\leq \widetilde{L}_1||u_1-u_1^*||,\;\;\forall u_1,u_1^*\in C^0[\alpha_0,\xi],
        \end{equation}
        \begin{equation}\label{47}
             ||L_2^*(u_2)-L_2^*(u_2^*)||\leq \widetilde{L}_2||u_2-u_2^*||,\;\;\forall u_2,u_2^*\in \mathcal{K},
        \end{equation}
    \item[2)] There exists $N_m>0$ and $N_M>0$ such that
        \begin{equation}\label{48}
            N_m\leq N_1^*(u_1)\leq N_M,\;\;\forall u_1\in C^0[\alpha_0, \xi],
        \end{equation}
        \begin{equation}\label{49}
            N_m\leq N_2^*(u_2)\leq N_M,\;\;\forall u_2\in \mathcal{K}.
        \end{equation}
        There exists $\widetilde{N}_i,\;i=1,2$  such that
        \begin{equation}\label{50}
            ||N_1^*(u_1)-N_1^*(u_1^*)||\leq \widetilde{N}_1||u_1-u_1^*||,\;\;\forall u_1,u_1^*\in C^0[\alpha_0, \xi],
        \end{equation}
        \begin{equation}\label{51}
            ||N_2^*(u_2)-N_2^*(u_2^*)||\leq \widetilde{N}_2||u_2-u_2^*||,\;\;\forall u_2,u_2^*\in \mathcal{K}.
        \end{equation}
    \item[3)] There exists $K_m>0$, $K_M>0$ and  $R\geq \frac{a^2 N_M}{L_m}$ such that
        \begin{equation}\label{52}
            K_m\leq K_1^*(u_1)\leq K_M,\;\;\forall u_1\in C^0[\alpha_0,\xi],
        \end{equation}
        \begin{equation}\label{53}
            K_2^*(u_2(s))\leq K_M \exp\left(-Rs^2 \right),\;\;\forall u_2\in \mathcal{K}.
        \end{equation}
        There exists $\widetilde{K}_i>0$ such that
        \begin{equation}\label{54}
            ||K_1^*(u_1)-K_1^*(u_1^*)||\leq \widetilde{K}_1||u_1-u_1^*||,\;\;\forall u_1,u_1^*\in C^0[\alpha_0, \xi],
        \end{equation}
        \begin{equation}\label{55}
            |K_2^*(u_2(s))-K_2^*(u_2^*(s))|\leq \widetilde{K}_2 \exp\left(-Rs^2 \right)||u_2-u_2^*||,\;\;\forall u_2,u_2^*\in \mathcal{K}.
        \end{equation}
\end{enumerate}

Now we will obtain some preliminary results to prove existence and uniqueness of  solution to equations \eqref{33} and \eqref{34}.

\begin{lemma}\label{lem1}
Assuming  \eqref{44}-\eqref{55}, then the following inequalities hold:
\begin{equation}\label{56}
    \exp\bigg(-a^2\dfrac{N_M}{L_m}(\eta^2-\alpha_0^2)\bigg)\leq E_1(\eta, u_1, \alpha_0)\leq \exp\bigg(-a^2\dfrac{N_m}{L_M}(\eta^2-\alpha_0^2)\bigg),\qquad \alpha_0\leq \eta\leq \xi
\end{equation}

\begin{equation}\label{57}
    \exp\bigg(-a^2\dfrac{N_M}{L_m}(\eta^2-\xi^2)\bigg)\leq E_2(\eta, u_2, \xi)\leq \exp\bigg(-a^2\dfrac{N_m}{L_M}(\eta^2-\xi^2)\bigg),\qquad \eta\geq \xi
\end{equation}

\begin{equation}\label{58}
    \begin{split}
  \chi_1(\eta,u_1,\alpha_0)\leq  \dfrac{  a\exp\bigg(a^2\dfrac{N_m}{L_M}\alpha_0^2\bigg)}{L_m} \sqrt{\dfrac{N_m}{L_M}} h(\eta,N_m,L_M,\alpha_0), \qquad \alpha_0\leq \eta\leq \xi
    \end{split}
\end{equation}

\begin{equation}\label{59}
    \begin{array}{ll}
     \chi_2(\eta,u_2,\xi)\geq \dfrac{  a\exp\bigg(a^2\dfrac{N_M}{L_m}\xi^2\bigg)}{L_M} \sqrt{\dfrac{N_M}{L_m}}h(\eta,N_M,L_m,\xi),\qquad \eta\geq \xi \\ \\
    \chi_2(\eta,u_2,\xi)\leq  \dfrac{  a\exp\bigg(a^2\dfrac{N_m}{L_M}\xi^2\bigg)}{L_m} \sqrt{\dfrac{N_m}{L_M}} h(\eta,N_m,L_M,\xi)\leq \dfrac{1}{L_m}\left(\dfrac{1}{\xi}-\dfrac{1}{\eta} \right) ,\qquad \eta\geq \xi.
    \end{array}
\end{equation}


\begin{equation}\label{61}
    \Phi_2(\eta,u_1,\xi)\leq \dfrac{k^2K_M}{16a^2\pi^{2}L_m}\dfrac{1}{\xi^2},\qquad \eta\geq \xi
\end{equation}
where  
\begin{equation}\label{h}
    \begin{array}{ll}
        h(\eta,N,L,z)&=\sqrt{\pi}\erf\bigg(a\sqrt{\tfrac{N}{L}}z\bigg)-\sqrt{\pi}\erf\bigg(a\sqrt{\tfrac{N}{L}}\eta\bigg)\\
        &+\tfrac{\sqrt{L}}{a\sqrt{N}z}\exp\bigg(-a^2\tfrac{N}{L}z^2\bigg)-\tfrac{\sqrt{L}}{a\sqrt{N}\eta}\exp\bigg(-a^2\tfrac{N}{L}\eta^2\bigg)
    \end{array}
\end{equation}

\end{lemma}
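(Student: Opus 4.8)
The plan is to prove the six estimates in the order listed, in each case substituting the pointwise two–sided bounds on $N_i^*,L_i^*,K_i^*$ furnished by \eqref{44}--\eqref{55} into the explicit integral representations \eqref{35}--\eqref{38} and then estimating the resulting elementary integrals. The estimates on $E_1,E_2$ come first because the bounds on $\chi_1,\chi_2$ and $\Phi_2$ all rely on them.

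First I would treat \eqref{56} and \eqref{57}. In the exponent of $E_1$ defined by \eqref{37}, the ratio $N_1^*/L_1^*$ lies between $N_m/L_M$ and $N_M/L_m$ by \eqref{44}--\eqref{45} and \eqref{48}--\eqref{49}. Since $\int_{\alpha_0}^{\eta} v\,dv=\tfrac12(\eta^2-\alpha_0^2)$, the quantity $2a^2\int_{\alpha_0}^{\eta} v\,(N_1^*/L_1^*)\,dv$ is squeezed between $a^2\tfrac{N_m}{L_M}(\eta^2-\alpha_0^2)$ and $a^2\tfrac{N_M}{L_m}(\eta^2-\alpha_0^2)$; applying the decreasing map $x\mapsto e^{-x}$ reverses the inequalities and yields \eqref{56}. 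The bound \eqref{57} for $E_2$ is identical with $\alpha_0$ replaced by $\xi$.

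Next, for the $\chi$-estimates \eqref{58}--\eqref{59} I would insert the exponential envelopes just obtained together with $L_m\le L_i^*\le L_M$ into \eqref{35} and \eqref{36}. For the upper bound on $\chi_1$ this gives
\[
\chi_1(\eta,u_1,\alpha_0)\le \frac{\exp\!\big(a^2\tfrac{N_m}{L_M}\alpha_0^2\big)}{L_m}\int_{\alpha_0}^{\eta}\frac{\exp\!\big(-a^2\tfrac{N_m}{L_M}v^2\big)}{v^2}\,dv ,
\]
and the remaining integral is evaluated in closed form by integrating by parts with $u=\exp(-cv^2)$, $dw=v^{-2}dv$ and $c=a^2N_m/L_M$, which produces exactly the combination $\int_z^\eta v^{-2}e^{-cv^2}dv=a\sqrt{N/L}\,h(\eta,N,L,z)$ of $\erf$ and exponential terms collected in $h$. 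The lower and upper bounds for $\chi_2$ follow by the same computation, using the lower (resp.\ upper) envelope for $E_2$ together with $L_2^*\le L_M$ (resp.\ $L_2^*\ge L_m$); the final inequality $\chi_2\le\frac{1}{L_m}(\tfrac1\xi-\tfrac1\eta)$ is obtained more crudely from $E_2\le1$ and $L_2^*\ge L_m$, since $\exp(-c(v^2-\xi^2))\le1$ for $v\ge\xi$.

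The delicate estimate is \eqref{61}, and this is where I expect the main obstacle. Writing $\Phi_2$ out from \eqref{36a}, the inner integral contains the factor $K_2^*(u_2(s))/E_2(s,u_2,\xi)$, in which the decay hypothesis \eqref{53}, $K_2^*(u_2(s))\le K_M e^{-Rs^2}$, must compete with the growth of $1/E_2$, bounded by $\exp\!\big(a^2\tfrac{N_M}{L_m}(s^2-\xi^2)\big)$ via \eqref{57}. The product is therefore controlled by
\[
K_M\exp\!\Big(-\big(R-a^2\tfrac{N_M}{L_m}\big)s^2\Big)\exp\!\Big(-a^2\tfrac{N_M}{L_m}\xi^2\Big)\le K_M ,
\]
and this is precisely the role of the standing assumption $R\ge a^2N_M/L_m$: it forces the first exponent to be nonpositive so the whole factor stays below $K_M$. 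With $K_2^*/E_2\le K_M$ the inner integral is at most $K_M\int_\xi^v s^{-2}ds\le K_M/\xi$, and then bounding $E_2\le1$, $L_2^*\ge L_m$ in the outer integral gives $\Phi_2\le\frac{k^2K_M}{16a^2\pi^2 L_m\,\xi}\int_\xi^\eta v^{-2}dv\le\frac{k^2K_M}{16a^2\pi^{2}L_m}\,\frac{1}{\xi^2}$, which is \eqref{61}. The only genuinely non-mechanical ingredient is recognizing that the exponent comparison $R\ge a^2N_M/L_m$ is exactly what is needed to absorb the growth of $1/E_2$; everything else reduces to the two elementary integrals $\int v^{-2}e^{-cv^2}dv$ and $\int v^{-2}dv$.
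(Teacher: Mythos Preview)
Your argument is correct and follows the paper's approach closely for \eqref{56}--\eqref{59}: the same pointwise substitution of the bounds on $N_i^*/L_i^*$ into the exponent, and the same integration by parts (the paper phrases it as the substitution $t=a\sqrt{N_m/L_M}\,v$) to identify $\int v^{-2}e^{-cv^2}\,dv$ with the $h$-function.

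For \eqref{61} your proof is valid but differs from the paper's. You group the factors as $\big(K_2^*(u_2(s))/E_2(s,u_2,\xi)\big)\cdot E_2(v,u_2,\xi)$ and use the hypothesis $R\ge a^2N_M/L_m$ to kill the growth of $1/E_2$ against the decay of $K_2^*$. The paper instead groups them as $K_2^*(u_2(s))\cdot\big(E_2(v,u_2,\xi)/E_2(s,u_2,\xi)\big)$ and observes that $E_2$ is decreasing in its first argument, so $E_2(v,\cdot)\le E_2(s,\cdot)$ for $\xi\le s\le v$; then $K_2^*\le K_M$ suffices and the $R$-condition is not invoked at all at this stage. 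Both routes give the same final bound, but the paper's monotonicity trick is slightly cleaner here and shows that \eqref{61} does not actually depend on the size of $R$; the $R\ge a^2N_M/L_m$ hypothesis is only genuinely needed later, in the Lipschitz estimate for $\Phi_2$.
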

\begin{proof}
We are going to prove \eqref{56} and \eqref{58} by using definitions \eqref{35},   \eqref{37} and assumptions \eqref{44} and \eqref{48}. By using \eqref{37} we have
$$E_1(\eta,u_1,\alpha_0)\leq \exp\bigg(-2a^2\dfrac{N_m}{L_M}\int\limits_{\alpha_0}^{\eta}vdv\bigg)=\exp\bigg(-a^2\dfrac{N_m}{L_M}(\eta^2-\alpha_0^2)\bigg)$$
which gives us proof of \eqref{56} and with the help of definition \eqref{35} we get
$$\chi_1(\eta,u_1,\alpha_0)\leq \dfrac{\exp\bigg(a^2\dfrac{N_m}{L_M}\alpha_0^2\bigg)}{L_m}\int\limits_{\alpha_0}^{\eta}\dfrac{\exp\bigg(-a^2\frac{N_m}{L_M}v^2\bigg)}{ v^2}dv$$
by using the substitution $t=a\sqrt{\frac{N_m}{L_M}}v$ and the definition of Gauss error function $\erf(z)=\frac{2}{\sqrt{\pi}}\int\limits_0^z e^{-t^2}dt$ we obtain
$$\chi_1(\eta,u_1,\alpha_0)\leq \dfrac{a\exp\bigg(a^2\dfrac{N_m}{L_M}\alpha_0^2\bigg)}{L_m}\sqrt{\dfrac{N_m}{L_M}}\int\limits_{a\sqrt{N_m/L_M}\alpha_0}^{a\sqrt{N_m/L_M}\eta}\dfrac{e^{-t^2}}{ t^2}dt$$
$$\leq \dfrac{a\exp\bigg(a^2\dfrac{N_m}{L_M}\alpha_0^2\bigg)}{L_m}\sqrt{\dfrac{N_m}{L_M}}\Bigg[\dfrac{\sqrt{L_M}}{a\sqrt{N_m}\alpha_0}\exp\bigg(-a^2\dfrac{N_m}{L_M}\alpha_0^2\bigg)-\dfrac{\sqrt{L_M}}{a\sqrt{N_m}\eta}\exp\bigg(-a^2\dfrac{N_m}{L_M}\eta^2\bigg)$$
$$+\sqrt{\pi}\erf\bigg(a\sqrt{\dfrac{N_m}{L_M}}\alpha_0\bigg)-\sqrt{\pi}\erf\bigg(a\sqrt{\dfrac{N_m}{L_M}}\eta\bigg)\Bigg]\leq \dfrac{a\exp\bigg(a^2\dfrac{N_m}{L_M}\alpha_0^2\bigg)}{L_m}\sqrt{\dfrac{N_m}{L_M}}\;h(\eta,N_m,L_M,\alpha_0).$$

By using the definition \eqref{36}, \eqref{38} and using \eqref{45} and \eqref{49} we can prove \eqref{57} and \eqref{59}.

In order to prove  \eqref{61} we use \eqref{45} and \eqref{57}.  

Notice that for $\xi<s<v<\eta$ we have $E_2(v,u_2,\xi)\leq E_2(s,u_2,\xi)$, then
$$\Phi_2(\eta,u_2,\xi)\leq \dfrac{k^2 K_M}{16a^2\pi^2 L_m}\int\limits_{\xi}^{\eta}\dfrac{1}{v^2}\int\limits_{\xi}^{v}\dfrac{1}{s^2}dsdv\leq \dfrac{k^2 K_M}{16a^2\pi^2 L_m \xi}\int\limits_{\xi}^{\eta}\dfrac{1}{v^2} dv=\dfrac{k^2 K_M}{16a^2\pi^2 L_m \xi^2}  .$$
\end{proof}

\begin{lemma}\label{lem2}
If assumptions \eqref{44}-\eqref{55} hold then
\begin{enumerate}
    \item[1)] For all $\eta\in[\alpha_0,\xi]$ and $u_1,u_1^*\in C^0[\alpha_0,\xi]$ we have
        \begin{equation}\label{600}
            |E_1(\eta,u_1,\alpha_0)-E_1(\eta, u_1^*,\alpha_0)|\leq \widetilde{E}_1(\eta,\alpha_0)||u_1-u_1^*||,
        \end{equation}
        \begin{equation}\label{60a}
            |\chi_1(\eta,u_1,\alpha_0)-\chi_1(\eta,u_1^*,\alpha_0)|\leq \widetilde{\chi}_1(\eta,\alpha_0)||u_1-u_1^*||,
        \end{equation}
        where
\begin{equation}\label{tildeE1}
  \widetilde{E}_1(\eta, \alpha_0)=a^2\bigg(\dfrac{\widetilde{N}_1}{L_m}+\dfrac{N_M\widetilde{L}_1}{L_m^2}\bigg)(\eta^2-\alpha_0^2),  
\end{equation}
\begin{equation}\label{tildechi1}
    \widetilde{\chi}_1(\eta,\alpha_0)=\dfrac{a^2}{L_m}\bigg(\dfrac{\widetilde{N}_1}{L_m}+\dfrac{N_M\widetilde{L}_1}{L_m^2}\bigg)\bigg(\eta+\dfrac{\alpha_0^2}{\eta}-2\alpha_0\bigg)+\dfrac{\widetilde{L}_1}{L_m^2}\bigg(\dfrac{1}{\alpha_0}-\dfrac{1}{\eta}\bigg).
\end{equation}
    \item[2)] For all $\eta\in[\xi,\infty)$ and $u_2,u_2^*\in \mathcal{K}$ we have
        \begin{equation}\label{610}
            |E_2(\eta,u_2,\xi)-E_2(\eta, u_2^*,\xi)|\leq \widetilde{E}_2(\eta,\xi)||u_2-u_2^*||,
        \end{equation}
        \begin{equation}\label{61a}
            |\chi_2(\eta,u_2,\xi)-\chi_2(\eta,u_2^*,\xi)|\leq \widetilde{\chi}_2(\eta,\xi)||u_2-u_2^*||,
        \end{equation}
\end{enumerate}
where
\begin{equation}\label{tildeE2}
\widetilde{E}_2(\eta,\xi)=\exp\left(-a^2(\eta^2-\xi^2)\tfrac{N_m}{L_M} \right) \frac{a^2}{L_m^2}\big( L_M \widetilde{N}_2+N_M \widetilde{L}_2\big) (\eta^2-\xi^2)
\end{equation}

\[
    \widetilde{\chi}_2(\eta,\xi)= \tfrac{a L_M^{3/2} \sqrt{\pi}}{2L_m^4 \sqrt{N_m}}\big( L_M \widetilde{N}_2+N_M \widetilde{L}_2\big) \exp\left( a^2\xi^2\tfrac{N_m}{L_M}\right)
    \] 

\begin{equation}\label{tildechi2} \left(\erf\left(a\eta\sqrt{\tfrac{N_m}{L_M}} \right)-\erf\left(a\xi\sqrt{\tfrac{N_m}{L_M}} \right) \right) +\frac{\widetilde{L}_2}{L_m^2} \frac{1}{\xi}
\end{equation}

\end{lemma}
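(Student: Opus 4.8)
The plan is to reduce all four estimates to two elementary ingredients: the mean value inequality $|e^{-x}-e^{-y}|\le e^{-\min(x,y)}|x-y|$, valid for $x,y\ge 0$, and the algebraic identity
\[
\frac{A}{B}-\frac{A^*}{B^*}=\frac{A-A^*}{B}+\frac{A^*(B^*-B)}{B B^*},
\]
which, combined with the two-sided bounds and Lipschitz hypotheses \eqref{44}--\eqref{55}, controls the difference of the quotients $N_i^*/L_i^*$ and $E_i/L_i^*$ that sit inside the integrals defining $E_i$ and $\chi_i$. Throughout, the uniform bounds let me replace the denominators by $L_m$ or $L_m^2$ and the factors $N^*$, $E^*$ by their uniform upper bounds.

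First I would treat $E_1$. Writing $E_1(\eta,u_1,\alpha_0)=e^{-g(u_1)}$ with $g(u_1)=2a^2\int_{\alpha_0}^{\eta}v\,N_1^*(u_1(v))/L_1^*(u_1(v))\,dv\ge 0$, the mean value inequality gives $|E_1(\eta,u_1,\alpha_0)-E_1(\eta,u_1^*,\alpha_0)|\le |g(u_1)-g(u_1^*)|$. Applying the identity above to $N_1^*/L_1^*$ and using \eqref{44}, \eqref{46}, \eqref{48}, \eqref{50} bounds the integrand difference by $\big(\widetilde N_1/L_m+N_M\widetilde L_1/L_m^2\big)\|u_1-u_1^*\|$; integrating $v$ over $[\alpha_0,\eta]$ produces the factor $(\eta^2-\alpha_0^2)/2$ and yields \eqref{tildeE1}. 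For $\chi_1$ I would apply the same identity to $E_1/L_1^*$, splitting the difference into an $E_1$-difference term and an $L_1^*$-difference term; the first is controlled by the $E_1$ estimate just proved, the second by \eqref{46} together with $E_1^*\le 1$. The two resulting integrals $\int_{\alpha_0}^\eta (v^2-\alpha_0^2)v^{-2}\,dv=\eta-2\alpha_0+\alpha_0^2/\eta$ and $\int_{\alpha_0}^\eta v^{-2}\,dv=1/\alpha_0-1/\eta$ are elementary and assemble to \eqref{tildechi1}.

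The estimates for $E_2$ and $\chi_2$ follow the same pattern, but require keeping the sharper exponential factor. For $E_2$ I would use that $g_2(u_2)=2a^2\int_\xi^\eta v\,N_2^*/L_2^*\,dv\ge a^2\tfrac{N_m}{L_M}(\eta^2-\xi^2)$, so the mean value inequality contributes the decaying prefactor $\exp(-a^2\tfrac{N_m}{L_M}(\eta^2-\xi^2))$; the remaining integrand difference is bounded exactly as for $E_1$, giving \eqref{tildeE2}. For $\chi_2$ I would again split $E_2/L_2^*$ via the identity, using the Gaussian bound \eqref{57} for $E_2$ in the $L_2^*$-difference term (which, after $\int_\xi^\eta v^{-2}\,dv\le 1/\xi$, produces the term $\widetilde L_2/(L_m^2\xi)$) and the freshly proved bound \eqref{610} for $E_2$ in the $E_2$-difference term. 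The latter leaves the integral $\int_\xi^\eta e^{-a^2\frac{N_m}{L_M}v^2}\,(v^2-\xi^2)v^{-2}\,dv$; bounding $(v^2-\xi^2)/v^2\le 1$ and substituting $t=a\sqrt{N_m/L_M}\,v$ turns it into an error-function difference, yielding the first term of \eqref{tildechi2}.

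I expect the main obstacle to be the $\chi_2$ estimate on the semi-infinite interval $[\xi,\infty)$: unlike the $\chi_1$ case, integrability and a finite bound are obtained only because the Lipschitz estimate $\widetilde E_2$ inherits the Gaussian decay $\exp(-a^2\tfrac{N_m}{L_M}(\eta^2-\xi^2))$ from the mean value step, which is precisely what makes the $v$-integral convergent and expressible through $\erf$. Care is needed to retain this decaying factor throughout, rather than the crude bound $|e^{-x}-e^{-y}|\le|x-y|$ used for $E_1$, and to observe that enlarging the tighter constant $L_m\widetilde N_2$ to $L_M\widetilde N_2$ only weakens the estimate, so the stated inequalities \eqref{tildeE2}--\eqref{tildechi2} hold.
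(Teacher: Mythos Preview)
Your proposal is correct and follows essentially the same route as the paper: the same splitting $\tfrac{A}{B}-\tfrac{A^*}{B^*}=\tfrac{A-A^*}{B}+\tfrac{A^*(B^*-B)}{BB^*}$ applied first to $N_i^*/L_i^*$ inside $E_i$ and then to $E_i/L_i^*$ inside $\chi_i$, followed by the same elementary integrals. The only point on which you are more explicit than the paper is in invoking the sharper form $|e^{-x}-e^{-y}|\le e^{-\min(x,y)}|x-y|$ (the paper states only $|e^{-x}-e^{-y}|<|x-y|$ when treating $E_1$, but tacitly uses the sharper version to obtain the Gaussian prefactor in $\widetilde E_2$); your observation that this decay is exactly what makes the $\chi_2$ integral finite and expressible via $\erf$ is the correct reason the argument closes on $[\xi,\infty)$.
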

\begin{proof}
By using inequality $|\exp(-x)-\exp(-y)|<|x-y|$, definition \eqref{37} and assumptions \eqref{44}-\eqref{55} for $u_1,u_1^*\in C^0[\alpha_0,\xi]$  we obtain
$$|E_1(\eta,u_1,\alpha_0)-E_1(\eta,u_1^*,\alpha_0)|\leq 2a^2\int\limits_{\alpha_0}^{\eta}\Bigg|\dfrac{N_1^*(u_1)}{L_1^*(u_1)}-\dfrac{N_1^*(u_1^*)}{L_1^*(u_1^*)}\Bigg|s\; ds$$
$$\leq 2a^2\int\limits_{\alpha_0}^{\eta}\Bigg(\dfrac{|N_1^*(u_1)-N_1^*(u_1^*)|}{|L_1^*(u_1)|}+|N_1^*(u_1^*)|\dfrac{|L_1^*(u_1)-L_1^*(u_1^*)|}{|L_1^*(u_1)||L_1^*(u_1^*)|}\Bigg)s\; ds$$
$$\leq a^2\Bigg(\dfrac{\widetilde{N}_1}{L_m}+\dfrac{N_M\widetilde{L}_1}{L_m^2}\Bigg)(\eta^2-\alpha_0^2)||u_1-u_1^*||$$
and
$$|\chi_1(\eta,u_1,\alpha_0)-\chi_1(\eta,u_1^*,\alpha_0)|\leq \int\limits_{\alpha_0}^{\eta}\dfrac{\big|E_1(v,u_1,\alpha_0)L_1^*(u_1^*)-E_1(v,u_1^*,\alpha_0)L_1^*(u_1)\big|}{|L_1^*(u_1)||L_1^*(u_1^*)|}\dfrac{dv}{v^2}$$
$$\leq\int\limits_{\alpha_0}^{\eta}\dfrac{\big|E_1(v,u_1,\alpha_0)-E_1(v,u_1^*,\alpha_0)\big|}{|L_1^*(u_1)|}\dfrac{dv}{v^2}+\int\limits_{\alpha_0}^{\eta}\dfrac{|L_1^*(u_1^*)-L_1^*(u_1)|}{|L_1^*(u_1)||L_1^*(u_1^*)|}\big|E_1(v,u_1^*,\alpha_0)\big|\dfrac{dv}{v^2}:= G_1+G_2$$
where
$$G_1\leq \dfrac{a^2}{L_m}\bigg(\dfrac{\widetilde{N}_1}{L_m}+\dfrac{N_M\widetilde{L}_1}{L_m^2}\bigg)||u_1-u_1^*||\int\limits_{\alpha_0}^{\eta}\dfrac{v^2-\alpha_0^2}{v^2}dv$$
$$\leq \dfrac{a^2}{L_m}\bigg(\dfrac{\widetilde{N}_1}{L_m}+\dfrac{N_M\widetilde{L}_1}{L_m^2}\bigg)\bigg(\eta+\dfrac{\alpha_0^2}{\eta}-2\alpha_0\bigg)||u_1-u_1^*||,$$
$$G_2\leq \dfrac{\widetilde{L}_1}{L_m^2}||u_1-u_1^*||\int\limits_{\alpha_0}^{\eta}\dfrac{dv}{v^2}\leq \dfrac{\widetilde{L}_1}{L_m^2}\bigg(\dfrac{1}{\alpha_0}-\dfrac{1}{\eta}\bigg)||u_1-u_1^*||,$$
then it implies that
$G_1+G_2=\widetilde{\chi}_1(\eta,\alpha_0)||u_1-u_1^*||.$

We prove \eqref{610} in a similar way to \eqref{600}:
$$\big|E_2(\eta,u_2,\xi)-E_2(\eta,u_2^*,\xi)\big|\leq \exp\left(-a^2(\eta^2-\xi^2)\tfrac{N_m}{L_M} \right) \frac{a^2}{L_m^2}\big( L_M \widetilde{N}_2+N_M \widetilde{L}_2\big) (\eta^2-\xi^2)||u_2-u_2^*||.$$
For inequality \eqref{61a}, notice that
$$\begin{array}{ll}
&|\chi_2(\eta,u_2,\xi)-\chi_2(\eta,u_2^*,\xi)|\leq \displaystyle\int\limits_{\xi}^{\eta}  \dfrac{|E_2(v,u_2,\xi)-E_2(v,u_2^*,\xi)| L_M+\widetilde{L}_2 ||u_2-u_2^*||}{L_m^2}\dfrac{1}{v^2}  dv\\
&\leq  \left[ \frac{a^2 L_M}{L_m^4}\big( L_M \widetilde{N}_2+N_M \widetilde{L}_2\big) \displaystyle\int\limits_{\xi}^{\eta} \exp\left(-a^2(v^2-\xi^2)\tfrac{N_m}{L_M} \right)   dv+ \frac{\widetilde{L}_2}{L_m^2} \frac{1}{\xi}\right]  ||u_2-u_2^*||\\
& \leq \left[ \tfrac{a L_M^{3/2} \sqrt{\pi}}{2L_m^4 \sqrt{N_m}}\big( L_M \widetilde{N}_2+N_M \widetilde{L}_2\big) \exp\left( a^2\xi^2\tfrac{N_m}{L_M}\right) \Bigg(\erf\left(a\eta\sqrt{\tfrac{N_m}{L_M}} \right)-\erf\left(a\xi\sqrt{\tfrac{N_m}{L_M}} \right) \Bigg)\right.\\ \\
&\left. + \frac{\widetilde{L}_2}{L_m^2} \frac{1}{\xi}\right]  ||u_2-u_2^*||
\end{array}$$

\end{proof}

\begin{lemma}\label{lem3}
We suppose that  \eqref{44}-\eqref{55} hold then
\begin{enumerate}
    \item[1)] For all $\eta\in[\alpha_0,\xi]$ and $u_1,u_1^*\in C^0[\alpha_0,\xi]$ we have
        \begin{equation}\label{62}
            |\Phi_1(\eta,u_1,\alpha_0)-\Phi_1(\eta,u_1^*,\alpha_0)|\leq\widetilde{\Phi}_1(\eta,\alpha_0)||u_1-u_1^*||, 
        \end{equation}
        where
\begin{equation}\label{tildePhi}
\widetilde{\Phi}_1(\eta,\alpha_0)=\tfrac{k^2}{16a^2\pi^2}\tfrac{K_M}{L_m}a^2\bigg(\tfrac{\widetilde{N_1}}{L_m}+\tfrac{N_M\widetilde{L_1}}{L_m^2}\bigg)\exp\bigg(a^2\tfrac{N_m}{L_M}(\eta^2-\alpha_0^2)\bigg)\Bigg[\ln\bigg|\tfrac{\alpha_0}{\eta}\bigg|+\tfrac{\eta}{\alpha_0}+\tfrac{\alpha_0}{\eta}-\tfrac{\alpha_0^2}{2\eta^2}-\tfrac{3}{2}\Bigg]
       \end{equation}
      
\[   +\tfrac{k^2}{16a^2\pi^2}\big(\dfrac{K_M}{L_m^2}+\dfrac{\widetilde{K_1}}{L_m}\widetilde{L_1}\big)\exp\bigg(a^2\dfrac{N_M}{L_m}(\eta^2-\alpha_0^2)\bigg)\bigg[\dfrac{1}{2\alpha_0^2}+\dfrac{1}{2\eta^2}-\dfrac{1}{\alpha_0\eta}\bigg]
        \]
         \[    + \dfrac{k^2}{16a^2\pi^2}\dfrac{K_M}{L_m}a^2\bigg(\dfrac{\widetilde{N_1}}{L_m}+\dfrac{N_M\widetilde{L_1}}{L_m^2}\bigg)\exp^2\bigg(a^2\dfrac{N_M}{L_m}(\eta^2-\alpha_0^2)\bigg)\bigg[\ln\left| \dfrac{\eta}{\alpha_0}\right|-\dfrac{\alpha_0^2}{2\eta^2}-\dfrac{3}{2}+\dfrac{2\alpha_0}{\eta} \bigg] 
 \]
       \\
       
    \item[2)] For all $\eta\in[\xi,\infty)$ and $u_2,u_2^*\in K$ we have
        \begin{equation}\label{63}
            |\Phi_2(\eta,u_2,\xi)-\Phi_2(\eta,u_2^*,\xi)|\leq\widetilde{\Phi}_2(\eta,\xi)||u_2-u_2^*||, 
        \end{equation}
        where

       \begin{equation}\label{phi2tilde}
\widetilde{\Phi}_2(\eta,\xi)=\dfrac{k^2K_M}{16\pi^{3/2}L_m^3}  \frac{\big( L_M \widetilde{N}_2+N_M \widetilde{L}_2\big)}{\sqrt{R-a^2\tfrac{N_M}{L_m}}} \dfrac{1}{\xi}+\dfrac{k^2\widetilde{K}_M}{16a^2\pi^{3/2} L_m  \sqrt{R-a^2\tfrac{N_M}{L_m}}}\, \dfrac{1}{\xi}
\end{equation}
\[+\dfrac{k^2K_M}{16a^2\pi^2}\left(\dfrac{a^2L_M}{L_m^4\xi} \big( L_M \widetilde{N}_2+N_M \widetilde{L}_2\big)\frac{\sqrt{\pi}}{\sqrt{R-a^2\tfrac{N_M}{L_m}}}+\dfrac{\widetilde{L}_2}{L_m^2} \dfrac{1}{\xi^2}\right)
       \]
\end{enumerate}

\medskip

\end{lemma}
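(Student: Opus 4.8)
The plan is to establish both Lipschitz estimates by the same mechanism already used for $\chi_1$ and $\chi_2$ in Lemma \ref{lem2}: write each difference $\Phi_i(\eta,u_i,\cdot)-\Phi_i(\eta,u_i^*,\cdot)$ as a difference of the double integrals in \eqref{35a} and \eqref{36a}, then telescope the integrand so that at each step only one of the $u_i$-dependent factors is perturbed while the others are frozen. The integrand of $\Phi_1$ is the product
$$\frac{E_1(v,u_1,\alpha_0)}{v^2\,L_1^*(u_1(v))}\cdot\frac{K_1^*(u_1(s))}{s^2\,E_1(s,u_1,\alpha_0)},$$
so four factors depend on $u_1$, namely $E_1(v,\cdot)$, $1/L_1^*(u_1(v))$, $K_1^*(u_1(s))$ and $1/E_1(s,\cdot)$. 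Using the identity $ABCD-A^*B^*C^*D^*=(A-A^*)BCD+A^*(B-B^*)CD+A^*B^*(C-C^*)D+A^*B^*C^*(D-D^*)$ reduces everything to the already-proven one-factor estimates.

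First I would bound the four resulting pieces for $\Phi_1$. The factor differences are controlled by \eqref{600} (for $E_1$), by $|1/L_1^*(u_1)-1/L_1^*(u_1^*)|\le \widetilde L_1/L_m^2\,\|u_1-u_1^*\|$ coming from \eqref{44} and \eqref{46}, by \eqref{54} (for $K_1^*$), and for the reciprocal $1/E_1$ by
$$\Big|\tfrac{1}{E_1(s,u_1,\alpha_0)}-\tfrac{1}{E_1(s,u_1^*,\alpha_0)}\Big|=\frac{|E_1(s,u_1^*,\alpha_0)-E_1(s,u_1,\alpha_0)|}{E_1(s,u_1,\alpha_0)\,E_1(s,u_1^*,\alpha_0)},$$
where the numerator is handled by \eqref{600} and the denominator by the lower bound in \eqref{56}. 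The frozen factors are bounded by the upper and lower envelopes \eqref{44}, \eqref{52}, \eqref{56}. After inserting these, each piece becomes an explicit iterated integral in $v$ and $s$ of $v^{-2}$, $s^{-2}$ times Gaussian exponentials; carrying out the inner $s$-integration and then the outer $v$-integration produces exactly the logarithmic and rational expressions $\ln|\alpha_0/\eta|$, $\eta/\alpha_0$, $1/(2\alpha_0^2)\pm\cdots$ appearing in \eqref{tildePhi}. The squared exponential $\exp^2\big(a^2 N_M/L_m(\eta^2-\alpha_0^2)\big)$ in the third line of \eqref{tildePhi} is precisely the trace of the two reciprocals $1/\big(E_1(s,u_1)E_1(s,u_1^*)\big)$ produced by the $D-D^*$ term, each bounded below via \eqref{56}.

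For part 2) the same telescoping applies to the integrand of $\Phi_2$ with $E_2$, $L_2^*$, $K_2^*$ in place of their index-$1$ counterparts, using \eqref{610}, \eqref{45}, \eqref{47} and \eqref{55}. The essential new feature is that the outer integral now runs to $\eta$ possibly equal to $+\infty$, so integrability at infinity must be secured. This is exactly where assumptions \eqref{53}--\eqref{55} with $R\ge a^2 N_M/L_m$ enter: the growth $1/E_2(s,\cdot)\le\exp\big(a^2 N_M/L_m(s^2-\xi^2)\big)$ coming from the lower bound in \eqref{57} is dominated by the decay $\exp(-Rs^2)$ carried by $K_2^*$, leaving a convergent Gaussian $\exp\big(-(R-a^2 N_M/L_m)s^2\big)$; its integral is responsible for the factors $1/\sqrt{R-a^2 N_M/L_m}$ in \eqref{phi2tilde}, while the remaining $v$-integrals of $v^{-2}$ against $\exp\big(-a^2 N_m/L_M(v^2-\xi^2)\big)$ give the $1/\xi$ and error-function contributions.

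The main obstacle will be the bookkeeping around the reciprocal exponentials $1/E_i$: unlike the estimates for $\chi_i$ in Lemma \ref{lem2}, the inner integral of $\Phi_i$ contains $1/E_i(s)$, so perturbing this factor creates a product of two reciprocals and hence the squared exponential growth, and one must verify that for $\Phi_2$ this growth is still absorbed by the $\exp(-Rs^2)$ decay of $K_2^*$. Once the correct upper and lower exponential envelopes from Lemma \ref{lem1} are matched to each of the four telescoped pieces, the remaining work is the routine (if lengthy) evaluation of the elementary $v$- and $s$-integrals, which assemble into the stated functions $\widetilde\Phi_1$ and $\widetilde\Phi_2$.
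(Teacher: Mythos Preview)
Your proposal is correct and follows essentially the same route as the paper. The paper carries out the telescoping in nested steps---first splitting off the $E_1(v,\cdot)$ difference (their $J_1$), then the $1/L_1^*$ difference ($H_1$), then the $K_1^*$ difference ($C_1$), and finally the $1/E_1(s,\cdot)$ difference ($C_2$)---which is exactly your four-factor identity $ABCD-A^*B^*C^*D^*$ unwound step by step; for part~2) the paper groups the outer factors $E_2(v)/L_2^*$ into a single term (yielding three pieces $M_1,M_2,M_3$ instead of four), but the estimates and the use of the decay hypothesis \eqref{53}--\eqref{55} to absorb the reciprocal $1/E_2(s,\cdot)$ are identical to what you describe.
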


\begin{proof}
Suppose \eqref{44}-\eqref{55} hold then by using Lemma \ref{lem1}, \ref{lem2} and definition \eqref{35a} we have
$$|\Phi_1(\eta,u_1,\alpha_0)-\Phi_1(\eta,u_1^*,\alpha_0)|$$
$$\leq\dfrac{k^2}{16a^2\pi^2}\int\limits_{\alpha_0}^{\eta}\Bigg|\dfrac{E_1(v,u_1,\alpha_0)}{L_1^*(u_1)}\int\limits_{\alpha_0}^v\dfrac{K_1^*(u_1)ds}{s^2E_1(s,u_1,\alpha_0)}-\dfrac{E_1(v,u_1^*,\alpha_0)}{L_1^*(u_1^*)}\int\limits_{\alpha_0}^v\dfrac{K_1^*(u_1^*)ds}{s^2E_1(s,u_1^*,\alpha_0)}\Bigg|\dfrac{dv}{v^2}$$

$$\leq\dfrac{k^2}{16a^2\pi^2}\int\limits_{\alpha_0}^{\eta}\dfrac{\bigg|E_1(v,u_1,\alpha_0)L_1^*(u_1^*)\int\limits_{\alpha_0}^v\frac{K_1^*(u_1)ds}{s^2E_1(s,u_1,\alpha_0)}-E_1(v,u_1^*,\alpha_0)L_1^*(u_1^*)\int\limits_{\alpha_0}^v\frac{K_1^*(u_1)ds}{s^2E_1(s,u_1,\alpha_0)}\bigg|}{|L_1^*(u_1)|\cdot|L_1^*(u_1^*)|}\dfrac{dv}{v^2}$$
$$+\dfrac{k^2}{16a^2\pi^2}\int\limits_{\alpha_0}^{\eta}\dfrac{\bigg|E_1(v,u_1^*,\alpha_0)L_1^*(u_1^*)\int\limits_{\alpha_0}^v\frac{K_1^*(u_1)ds}{s^2E_1(s,u_1,\alpha_0)}-E_1(v,u_1^*,\alpha_0)L_1^*(u_1)\int\limits_{\alpha_0}^v\frac{K_1^*(u_1^*)ds}{s^2E_1(s,u_1^*,\alpha_0)}\bigg|}{|L_1^*(u_1)|\cdot|L_1^*(u_1^*)|}\dfrac{dv}{v^2}$$
$$:=J_1+J_2$$
where
$$J_1\leq  \dfrac{k^2}{16a^2\pi^2}\int\limits_{\alpha_0}^{\eta}\dfrac{|E_1(v,u_1,\alpha_0)-E_1(v,u_1^*,\alpha_0)|}{|L_1^*(u_1)|}\Bigg|\int\limits_{\alpha_0}^{v}\dfrac{K_1^*(u_1)ds}{s^2E_1(s,u_1,\alpha_0)}\Bigg|\dfrac{dv}{v^2}$$
$$\leq\dfrac{k^2}{16a^2\pi^2}\dfrac{K_M}{L_m}a^2\bigg(\dfrac{\widetilde{N}_1}{L_m}+\dfrac{N_M\widetilde{L}_1}{L_m^2}\bigg)\exp\bigg(a^2\dfrac{N_m}{L_M}(\eta^2-\alpha_0^2)\bigg)||u_1-u_1^*||\int\limits_{\alpha_0}^{\eta}\dfrac{v^2-\alpha_0^2}{v^2}\int\limits_{\alpha_0}^v\dfrac{ds}{s^2}dv$$
$$\leq \dfrac{k^2}{16a^2\pi^2}\dfrac{K_M}{L_m}a^2\bigg(\dfrac{\widetilde{N}_1}{L_m}+\dfrac{N_M\widetilde{L}_1}{L_m^2}\bigg)\exp\bigg(a^2\dfrac{N_m}{L_M}(\eta^2-\alpha_0^2)\bigg)\Bigg[\ln\bigg|\dfrac{\alpha_0}{\eta}\bigg|+\dfrac{\eta}{\alpha_0}+\dfrac{\alpha_0}{\eta}-\dfrac{\alpha_0^2}{2\eta^2}-\dfrac{3}{2}\Bigg]||u_1-u_1^*||.$$
and 
$$J_2\leq \dfrac{k^2}{16a^2\pi^2}\int\limits_{\alpha_0}^{\eta}\dfrac{\bigg|L_1^*(u_1^*)\int_{\alpha_0}^{v}\frac{K_1^*(u_1)ds}{s^2E_1(s,u_1,\alpha_0)}-L_1^*(u_1)\int_{\alpha_0}^{v}\frac{K_1^*(u_1^*)ds}{s^2E_1(s,u_1^*,\alpha_0)}\bigg|}{|L_1^*(u_1)|\cdot |L_1^*(u_1^*)|}\big|E_1(v,u_1^*,\alpha_0)\big|\dfrac{dv}{v^2}$$
$$\leq \dfrac{k^2}{16a^2\pi^2}\int\limits_{\alpha_0}^{\eta}\dfrac{\bigg|L_1^*(u_1^*)\int_{\alpha_0}^{v}\frac{K_1^*(u_1)ds}{s^2E_1(s,u_1,\alpha_0)}-L_1^*(u_1)\int_{\alpha_0}^{v}\frac{K_1^*(u_1)ds}{s^2E_1(s,u_1,\alpha_0)}\bigg|}{|L_1^*(u_1)|\cdot |L_1^*(u_1^*)|}\big|E_1(v,u_1^*,\alpha_0)\big|\dfrac{dv}{v^2}$$
$$+\dfrac{k^2}{16a^2\pi^2}\int\limits_{\alpha_0}^{\eta}\dfrac{\bigg|L_1^*(u_1)\int_{\alpha_0}^{v}\frac{K_1^*(u_1)ds}{s^2E_1(s,u_1,\alpha_0)}-L_1^*(u_1)\int_{\alpha_0}^{v}\frac{K_1^*(u_1^*)ds}{s^2E_1(s,u_1^*,\alpha_0)}\bigg|}{|L_1^*(u_1)|\cdot |L_1^*(u_1^*)|}\big|E_1(v,u_1^*,\alpha_0)\big|\dfrac{dv}{v^2}$$
$$:= H_1+H_2$$
where
$$H_1\leq  \dfrac{k^2}{16a^2\pi^2}\int\limits_{\alpha_0}^{\eta}\dfrac{|L_1^*(u_1^*)-L_1^*(u_1)|}{|L_1^*(u_1)|\cdot|L_1^*(u_1^*)|}\Bigg|\int\limits_{\alpha_0}^{v}\dfrac{K_1^*(u_1)ds}{s^2E_1(s,u_1,\alpha_0)}\Bigg|\dfrac{dv}{v^2}$$
$$\leq\dfrac{k^2}{16a^2\pi^2}\dfrac{K_M}{L_m^2}\widetilde{L_1}\exp\bigg(a^2\dfrac{N_M}{L_m}(\eta^2-\alpha_0^2)\bigg)||u_1-u_1^*||\int\limits_{\alpha_0}^{\eta}\dfrac{1}{v^2}\int\limits_{\alpha_0}^{v}\dfrac{1}{s^2}dsdv$$
$$\leq\dfrac{k^2}{16a^2\pi^2}\dfrac{K_M}{L_m^2}\widetilde{L_1}\exp\bigg(a^2\dfrac{N_M}{L_m}(\eta^2-\alpha_0^2)\bigg)\bigg[\dfrac{1}{2\alpha_0^2}+\dfrac{1}{2\eta^2}-\dfrac{1}{\alpha_0\eta}\bigg]||u_1-u_1^*||,$$
$$H_2\leq \dfrac{k^2}{16a^2\pi^2}\int\limits_{\alpha_0}^{\eta}\dfrac{\bigg|\int_{\alpha_0}^{v}\frac{K_1^*(u_1)ds}{s^2E_1(s,u_1,\alpha_0)}-\int\limits_{\alpha_0}^{v}\frac{K_1^*(u_1^*)ds}{s^2E_1(s,u_1^*,\alpha_0)}\bigg|}{|L_1^*(u_1^*)|}\dfrac{dv}{v^2}$$

$$\leq\dfrac{k^2}{16a^2\pi^2}\dfrac{1}{L_m}\int\limits_{\alpha_0}^{\eta}\dfrac{1}{v^2}\int\limits_{\alpha_0}^{v}\dfrac{\big|K_1^*(u_1)E_1(s,u_1^*,\alpha_0)-K_1^*(u_1^*)E_1(s,u_1^*,\alpha_0)\big|}{\big|E_1(s,u_1,\alpha_0)\big|\big|E_1(s,u_1^*,\alpha_0)\big|}\dfrac{1}{s^2}dsdv$$
$$+\dfrac{k^2}{16a^2\pi^2}\dfrac{1}{L_m}\int\limits_{\alpha_0}^{\eta}\dfrac{1}{v^2}\int\limits_{\alpha_0}^{v}\dfrac{\big|K_1^*(u_1^*)E_1(s,u_1^*,\alpha_0)-K_1^*(u_1^*)E_1(s,u_1,\alpha_0)\big|}{\big|E_1(s,u_1,\alpha_0)\big|\big|E_1(s,u_1^*,\alpha_0)\big|}\dfrac{1}{s^2}dsdv$$
$$:= C_1+C_2$$
where
$$C_1\leq \dfrac{k^2}{16a^2\pi^2}\dfrac{1}{L_m}\int\limits_{\alpha_0}^{\eta}\dfrac{1}{v^2}\int\limits_{\alpha_0}^{v}\frac{\big|K_1^*(u_1)-K_1^*(u_1^*)\big|}{\big|E_1(s,u_1,\alpha_0)\big|}\dfrac{1}{s^2}dsdv$$
$$\leq \dfrac{k^2}{16a^2\pi^2}\dfrac{\widetilde{K}_1}{L_m}\exp\bigg(a^2\dfrac{N_M}{L_m}(\eta^2-\alpha_0^2)\bigg)||u_1-u_1^*||\int\limits_{\alpha_0}^{\eta}\dfrac{1}{v^2}\int\limits_{\alpha_0}^{v}\dfrac{1}{s^2}dsdv$$
$$\leq\dfrac{k^2}{16a^2\pi^2}\dfrac{\widetilde{K}_1}{L_m}\exp\bigg(a^2\dfrac{N_M}{L_m}(\eta^2-\alpha_0^2)\bigg)\bigg[\dfrac{1}{2\alpha_0^2}+\dfrac{1}{2\eta^2}-\dfrac{1}{\alpha_0\eta}\bigg]||u_1-u_1^*||,$$
$$C_2\leq \dfrac{k^2}{16a^2\pi^2}\dfrac{1}{L_m}\int\limits_{\alpha_0}^{\eta}\dfrac{1}{v^2}\int\limits_{\alpha_0}^{v}\dfrac{\big|E_1(s,u_1^*,\alpha_0)-E_1(s,u_1,\alpha_0)\big|}{\big|E_1(s,u_1^*,\alpha_0)\big| \big|E_1(s,u_1,\alpha_0)\big|}\big|K_1^*(u_1^*)\big|\dfrac{1}{s^2}dsdv$$
$$\leq\dfrac{k^2}{16a^2\pi^2}\dfrac{K_M}{L_m}a^2\bigg(\dfrac{\widetilde{N}_1}{L_m}+\dfrac{N_M\widetilde{L}_1}{L_m^2}\bigg)\exp^2\bigg(a^2\dfrac{N_M}{L_m}(\eta^2-\alpha_0^2)\bigg)||u_1-u_1^*||\int\limits_{\alpha_0}^{\eta}\dfrac{1}{v^2}\int\limits_{\alpha_0}^{v}\dfrac{s^2-\alpha_0^2}{s^2}dsdv$$
$$\leq\dfrac{k^2}{16a^2\pi^2}\dfrac{K_M}{L_m}a^2\bigg(\dfrac{\widetilde{N}_1}{L_m}+\dfrac{N_M\widetilde{L}_1}{L_m^2}\bigg)\exp^2\bigg(a^2\dfrac{N_M}{L_m}(\eta^2-\alpha_0^2)\bigg)\bigg[\ln\left| \dfrac{\eta}{\alpha_0}\right|-\dfrac{\alpha_0^2}{2\eta^2}-\dfrac{3}{2}+\dfrac{2\alpha_0}{\eta} \bigg]||u_1-u_1^*||.$$

Then we have 
$$H_2\leq \left[\dfrac{k^2}{16a^2\pi^2}\dfrac{\widetilde{K_1}}{L_m}\exp\bigg(a^2\dfrac{N_M}{L_m}(\eta^2-\alpha_0^2)\bigg)\bigg[\dfrac{1}{2\alpha_0^2}+\dfrac{1}{2\eta^2}-\dfrac{1}{\alpha_0\eta}\bigg] \right.$$
$$\left. +\dfrac{k^2}{16a^2\pi^2}\dfrac{K_M}{L_m}a^2\bigg(\dfrac{\widetilde{N}_1}{L_m}+\dfrac{N_M\widetilde{L}_1}{L_m^2}\bigg)\exp^2\bigg(a^2\dfrac{N_M}{L_m}(\eta^2-\alpha_0^2)\bigg)\bigg[\ln\left| \dfrac{\eta}{\alpha_0}\right|-\dfrac{\alpha_0^2}{2\eta^2}-\dfrac{3}{2}+\dfrac{2\alpha_0}{\eta} \bigg]    \right]||u_1-u_1^*||$$
and finally we obtain
$$J_1+J_2\leq\widetilde{\Phi}_1(\alpha_0,\eta)||u_1-u_1^*||.$$
The inequality \eqref{63} can be proved in a similar way.

$$\bigg| \Phi_2(\eta,u_2,\xi)-\Phi_2(\eta,u_2^*,\xi)\bigg|\leq  \dfrac{k^2}{16a^2\pi^2} \left[\displaystyle\int\limits_{\xi}^\eta \frac{E_2(v,u_2,\xi)}{v^2 L_m} \displaystyle\int\limits_{\xi}^v \frac{1}{s^2} \frac{K_2^*(u_2)|E_2(v,u_2,\xi)-E_2(s,u_2^*,\xi)|}{E_2(s,u_2,\xi)E_2(s,u_2^*,\xi)} ds dv\right.$$
$$\left.+ \displaystyle\int\limits_{\xi}^\eta \frac{E_2(v,u_2^*,\xi)}{v^2 L_m} \displaystyle\int\limits_{\xi}^v \frac{1}{s^2} \frac{|K_2^*(u_2)-K_2^*(u_2^*)|}{E_2(s,u_2^*,\xi)} ds dv\right.$$ 
$$\left.+ \displaystyle\int\limits_{\xi}^\eta \dfrac{1}{v^2} \left|\frac{E_2(v,u_2,\xi)}{L_2^*(u_2)}-\frac{E_2(v,u_2^*,\xi)}{L_2^*(u_2^*)}\right|  \displaystyle\int\limits_{\xi}^v \dfrac{K_2^*(u_2^*)}{s^2 E_2(s,u_2^*,\xi)} ds dv\right] := M_1+M_2+M_3$$

On one hand, 
$$
    M_1\leq  \dfrac{k^2}{16a^2\pi^2} \displaystyle\int\limits_{\xi}^\eta \frac{1}{v^2 L_m} \displaystyle\int\limits_{\xi}^v \frac{1}{s^2} \frac{K_2^*(u_2)|E_2(v,u_2,\xi)-E_2(s,u_2^*,\xi)|}{E_2(s,u_2^*,\xi)} ds dv 
$$
$$\leq  \dfrac{k^2K_M}{16\pi^2L_m^3} \big( L_M \widetilde{N}_2+N_M \widetilde{L}_2\big) \displaystyle\int\limits_{\xi}^\eta \frac{1}{v^2} \displaystyle\int\limits_{\xi}^v \exp\left(-s^2\left(R-\tfrac{a^2N_M}{L_m} \right) \right)     ds dv  ||u_2-u_2^*||
$$
 $$\leq  \dfrac{k^2K_M}{16\pi^{3/2}L_m^3}  \frac{\big( L_M \widetilde{N}_2+N_M \widetilde{L}_2\big)}{\sqrt{R-a^2\tfrac{N_M}{L_m}}} \dfrac{1}{\xi} ||u_2-u_2^*||. $$
On the other hand,
$$
    M_2\leq \dfrac{k^2\widetilde{K}_2}{16a^2\pi^2 L_m} \displaystyle\int\limits_{\xi}^\eta \frac{1}{v^2} \displaystyle\int\limits_{\xi}^v \exp\left(-s^2\left(R-\tfrac{a^2N_M}{L_m} \right) \right)     ds dv 
  $$  $$\leq \frac{k^2\widetilde{K}_2}{16a^2\pi^{3/2} L_m  \sqrt{R-a^2\tfrac{N_M}{L_m}}}\, \dfrac{1}{\xi}\, ||u_2-u_2^*||.
$$

Taking into account that
$$\begin{array}{ll}
\left|\dfrac{E_2(v,u_2,\xi)}{L_2^*(u_2)}-\dfrac{E_2(v,u_2^*,\xi)}{L_2^*(u_2^*)}\right| \\ \\ \leq 
\dfrac{|E_2(v,u_2,\xi)-E_2(v,u_2^*,\xi)| L_2^*(u_2)+|L_2^*(u_2)-L_2^*(u_2^*)| E_2(v.u_2^*,\xi)}{L_2^*(u_2) L_2^*(u_2^*)}\\
\\ \leq \left(\dfrac{a^2L_M}{L_m^4} \big( L_M \widetilde{N}_2+N_M \widetilde{L}_2\big) v^2+\dfrac{\widetilde{L}_2}{L_m^2}\right)||u_2-u_2^*||
\end{array}
$$
and
$$\displaystyle\int\limits_{\xi}^v \dfrac{K_2^*(u_2^*)}{s^2 E_2(s,u_2^*,\xi)} ds \leq \dfrac{K_M}{\xi} \exp\left( -\left( R-a^2\tfrac{N_M}{L_m}\right) v^2\right),$$
then
\begin{equation}
M_3\leq \dfrac{k^2K_M}{16a^2\pi^2}\left(\dfrac{a^2L_M}{L_m^4\xi} \big( L_M \widetilde{N}_2+N_M \widetilde{L}_2\big)\frac{\sqrt{\pi}}{\sqrt{R-a^2\tfrac{N_M}{L_m}}}+\dfrac{\widetilde{L}_2}{L_m^2} \dfrac{1}{\xi^2}\right)||u_2-u_2^*||.
\end{equation}

Therefore
$$ M_1+M_2+M_3 \leq \widetilde{\Phi}_2(\eta,\xi)||u_2-u_2^*||.$$

\end{proof}

Now we will obtain the main result of this work from the following theorem.

\begin{lemma}\label{th1}
Suppose \eqref{44}-\eqref{55} hold. Then, for each $\alpha_0<\xi<\Bar{\xi}_1$, the 
self-map operator $V: C^0[\alpha_0,\xi]\to C^0[\alpha_0,\xi]$ defined by \eqref{40} is a contraction operator, 
 where $\Bar{\xi}_1>0$ is defined as the  unique solution to $\varepsilon_1(z,\alpha_0)=1$ with
\begin{equation}\label{64}
    \varepsilon_1(z):=2\alpha_0^2 P\widetilde{\chi}_1(z,\alpha_0)+\widetilde{\Phi}_1(z,\alpha_0).
\end{equation}
\end{lemma}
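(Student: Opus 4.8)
The statement calls $V$ a "self-map operator," so the content to be established is the contraction estimate, and the plan is to extract from \eqref{40} an explicit Lipschitz constant for $V$ and to show it drops below $1$ exactly when $\xi<\bar\xi_1$. First I would fix $u_1,u_1^*\in C^0[\alpha_0,\xi]$ and, starting from \eqref{40}, split $V(u_1)(\eta)-V(u_1^*)(\eta)$ into a $\chi_1$-part, namely $\alpha_0^2P\big[(\chi_1(\xi,u_1,\alpha_0)-\chi_1(\xi,u_1^*,\alpha_0))-(\chi_1(\eta,u_1,\alpha_0)-\chi_1(\eta,u_1^*,\alpha_0))\big]$, and the analogous $\Phi_1$-part. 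Applying the triangle inequality together with the Lipschitz bounds \eqref{60a} of Lemma \ref{lem2} and \eqref{62} of Lemma \ref{lem3} controls each increment by $\widetilde\chi_1(\cdot,\alpha_0)$, respectively $\widetilde\Phi_1(\cdot,\alpha_0)$, times $||u_1-u_1^*||$. Writing the differences $\chi_1(\xi,\cdot)-\chi_1(\eta,\cdot)$ and $\Phi_1(\xi,\cdot)-\Phi_1(\eta,\cdot)$ as single integrals over $[\eta,\xi]$ (since both $\chi_1$ and $\Phi_1$ are integrals starting at $\alpha_0$) is what keeps the $\Phi_1$ contribution at a single $\widetilde\Phi_1(\xi,\alpha_0)$, as required to match the coefficient in \eqref{64}.

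Next I would pass to the supremum norm by maximizing the resulting pointwise bound over $\eta\in[\alpha_0,\xi]$. The crucial observation is that the bounding functions in \eqref{tildechi1} and \eqref{tildePhi} are nonnegative and nondecreasing on $[\alpha_0,\xi]$: for $\widetilde\chi_1$ this is immediate from $\tfrac{d}{d\eta}\big(\eta+\alpha_0^2/\eta-2\alpha_0\big)=1-\alpha_0^2/\eta^2\ge 0$ together with the monotonicity of $1/\alpha_0-1/\eta$, while for $\widetilde\Phi_1$ it follows because each summand is a product of an increasing exponential with a nonnegative, increasing bracketed factor. Consequently the maximum is attained at $\eta=\xi$, collapsing the estimate into $||V(u_1)-V(u_1^*)||\le \varepsilon_1(\xi)\,||u_1-u_1^*||$ with $\varepsilon_1$ exactly as in \eqref{64}.

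Finally I would analyze $\varepsilon_1$ to locate $\bar\xi_1$ and close the argument. One checks directly that $\varepsilon_1(\alpha_0)=0$, since every bracketed factor in \eqref{tildechi1} and \eqref{tildePhi} vanishes at $\eta=\alpha_0$; that $\varepsilon_1$ is continuous and strictly increasing on $(\alpha_0,\infty)$ as a positive combination of the increasing functions above; and that $\varepsilon_1(z)\to\infty$ as $z\to\infty$, driven by the factors $\exp\!\big(a^2\tfrac{N_M}{L_m}(z^2-\alpha_0^2)\big)$ present in $\widetilde\Phi_1$. By the intermediate value theorem and strict monotonicity, these facts yield a unique $\bar\xi_1>\alpha_0$ solving $\varepsilon_1(\bar\xi_1)=1$, whence $\varepsilon_1(\xi)<1$ for every $\alpha_0<\xi<\bar\xi_1$ and $V$ is a contraction there. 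I expect the main obstacle to be the monotonicity bookkeeping for $\widetilde\Phi_1$ — confirming that its long expression is genuinely increasing and vanishes at $\alpha_0$ — and making the single-integral estimate of the $\Phi_1$-difference sharp enough to recover precisely the coefficient of \eqref{64} rather than twice it; once $\varepsilon_1$ is known to be continuous, strictly increasing, zero at $\alpha_0$, and unbounded, the existence and uniqueness of $\bar\xi_1$ are routine.
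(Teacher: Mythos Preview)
Your proposal is correct and follows essentially the same route as the paper: bound $|V(u_1)(\eta)-V(u_1^*)(\eta)|$ via the Lipschitz estimates \eqref{60a} and \eqref{62}, take the supremum using the monotonicity of $\widetilde\chi_1$ and $\widetilde\Phi_1$, and then invoke that $\varepsilon_1$ is continuous, vanishes at $\alpha_0$, is increasing, and blows up at infinity to locate $\bar\xi_1$. Your single-integral observation for the $\Phi_1$-difference is a genuine sharpening of the paper's argument --- the paper simply applies the triangle inequality (picking up a factor $2$ on both the $\chi_1$- and $\Phi_1$-parts) and then records $\varepsilon_1$ with only $\widetilde\Phi_1$ rather than $2\widetilde\Phi_1$, so your care there actually repairs a small slip; note, though, that the same trick would equally drop the factor $2$ on $\widetilde\chi_1$, so matching exactly the stated $\varepsilon_1$ requires either accepting the cruder triangle-inequality bound or living with a smaller (hence still valid) Lipschitz constant.
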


\begin{proof}

Let $u_1,\;u_1^*\in C^0[\alpha_0,\xi]$. 

For $\eta\in [\alpha_0,\xi] $, we have
$$\begin{array}{ll}
|V(u_1)(\eta)-V(u_1^*)(\eta)|\leq 2\alpha_0^2P \Big|\chi_1(\xi,u_1,\alpha_0)-\chi_1(\xi,u_1^*,\alpha_0)\Big|
\\ \\ +2\Big|\Phi_1(\xi,u_1,\alpha_0)-\Phi_1(\xi,u_1^*,\alpha_0)\Big|
\leq \bigg(2\alpha_0^2 P\widetilde{\chi}_1(\xi,\alpha_0)+\widetilde{\Phi}_1(\xi,\alpha_0) \bigg)||u_1-u_1^*||
\end{array}$$
then 
$$||V(u_1)-V(u_1^*)||\leq \varepsilon_1(\xi)||u_1-u_1^*|| $$
where the function $\varepsilon_1(z)$ is defined by \eqref{64}. 

We can easily check that function $\varepsilon_1$ is an increasing function which goes from $0$ to $+\infty$ when $z$ goes from $\alpha_0$ to $+\infty$.

Therefore, there exists a unique $\Bar{\xi}_1>0$ such that $\varepsilon_1(\Bar{\xi}_1)=1$ and 
$$\varepsilon_1(z)<1,\;\; \forall \alpha_0\leq z<\bar{\xi}_1,\;\;\text{and}\;\;\varepsilon_1(z)>1,\;\;\forall z>\bar{\xi}_1.$$

If we take $\xi$ such that $\alpha_0<\xi<\Bar{\xi}_1$, it implies $\varepsilon_1(\xi)<1$ then we can conclude that operator $V$ defined by \eqref{40} is a contraction operator. 

\end{proof}

\begin{theorem}\label{th1-a}
Suppose \eqref{44}-\eqref{55} hold. Then, for each $\alpha_0<\xi<\Bar{\xi}_1$ there exists a unique solution $u_1\in C^0[\alpha_0,\xi]$ to integral equation \eqref{33}.
\end{theorem}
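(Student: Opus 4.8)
The plan is to invoke the Banach fixed point theorem directly, since all the heavy lifting has already been accomplished in Lemma \ref{th1}. The operator $V$ defined by \eqref{40} is precisely the map whose fixed points are solutions to the integral equation \eqref{33}: indeed, comparing \eqref{33} with \eqref{40}, a function $u_1 \in C^0[\alpha_0,\xi]$ solves \eqref{33} if and only if $V(u_1) = u_1$. Therefore the existence and uniqueness of a solution to \eqref{33} is equivalent to the existence and uniqueness of a fixed point of $V$.

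First I would observe that $(C^0[\alpha_0,\xi], \|\cdot\|)$ with the supremum norm is a complete metric space, i.e.\ a Banach space, as already noted in the text preceding the statement of the operator $V$. Second, I would recall that under the hypotheses \eqref{44}--\eqref{55}, Lemma \ref{th1} establishes that for each $\xi$ with $\alpha_0 < \xi < \bar\xi_1$, the operator $V$ is a self-map of $C^0[\alpha_0,\xi]$ and is a contraction, since it satisfies
$$\|V(u_1) - V(u_1^*)\| \leq \varepsilon_1(\xi)\,\|u_1 - u_1^*\|$$
with the contraction constant $\varepsilon_1(\xi) < 1$ guaranteed by the choice $\xi < \bar\xi_1$. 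With these two facts in hand, the Banach fixed point theorem applies immediately: $V$ possesses a unique fixed point $u_1 \in C^0[\alpha_0,\xi]$.

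Finally, I would translate this fixed point back into the language of the original problem by noting the equivalence between the fixed point equation $V(u_1) = u_1$ and the integral equation \eqref{33}, thereby concluding that \eqref{33} admits a unique solution $u_1 \in C^0[\alpha_0,\xi]$ for each $\alpha_0 < \xi < \bar\xi_1$.

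There is essentially no obstacle here, as the theorem is a direct corollary of Lemma \ref{th1}; the genuine difficulty was entirely absorbed into verifying the contraction estimate, which required the Lipschitz bounds of Lemmas \ref{lem1}--\ref{lem3} and the monotonicity of $\varepsilon_1$. The only point demanding a sentence of care is confirming that $V$ genuinely maps $C^0[\alpha_0,\xi]$ into itself—that is, that $V(u_1)$ is continuous on $[\alpha_0,\xi]$—which follows from the continuity of the integrands defining $\chi_1$ and $\Phi_1$ in \eqref{35} and \eqref{35a}, but this self-map property is already asserted as part of Lemma \ref{th1}.
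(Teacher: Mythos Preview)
Your proposal is correct and takes essentially the same approach as the paper, which simply states that the result ``follows immediately from Lemma \ref{th1}.'' You have merely spelled out the application of the Banach fixed point theorem in more detail than the paper does.
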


\begin{proof}
It follows immediately from Lemma \ref{th1}.
\end{proof}

\begin{lemma}\label{th2}
Suppose \eqref{44}-\eqref{55} hold and assume that the function
\begin{equation}\label{epsilon2}
    \begin{array}{ll}
\varepsilon_2(z):= 2\widetilde{\Phi}_2(+\infty,\alpha_0)+\dfrac{2L_M\left[\mathcal{G}\left( az \tfrac{\sqrt{N_m}}{\sqrt{L_M}}\right) \frac{L_M^2}{2L_m^2N_m} +\widetilde{L}_2\right]}{L_m^2 \left[ 1-\mathcal{G}\left( az \tfrac{\sqrt{N_M}}{\sqrt{L_m}}\right)\right]}\left(1+
\dfrac{k^2K_M}{16a^2\pi^{2}L_m}\dfrac{1}{\alpha_0^2} \right),\quad\\ z\geq \alpha_0
\end{array}
\end{equation}
satisfies  
\begin{equation}\label{condepsilon2}
\varepsilon_2(\alpha_0)=2\widetilde{\Phi}_2(+\infty,\alpha_0)+\dfrac{2L_M\left[\mathcal{G}\left( a\alpha_0 \tfrac{\sqrt{N_m}}{\sqrt{L_M}}\right) \frac{L_M^2}{2L_m^2N_m} +\widetilde{L}_2\right]}{L_m^2 \left[ 1-\mathcal{G}\left( a\alpha_0 \tfrac{\sqrt{N_M}}{\sqrt{L_m}}\right)\right]}\left(1+
\dfrac{k^2K_M}{16a^2\pi^{2}L_m}\dfrac{1}{\alpha_0^2} \right)<1,
\end{equation} where $\mathcal{G}(x):=\sqrt{\pi} x \exp(x^2)(1-\erf(x)),\; x>0$.

Then, for each $\alpha_0<\xi<\overline{\xi}_2$,  the 
self-map operator $W: \mathcal{K}\to \mathcal{K}$ defined by \eqref{41} is a contraction operator
 where $\overline{\xi}_2>0$ is defined as a unique solution to $\varepsilon_2(z)=1$.
\end{lemma}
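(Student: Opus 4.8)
The plan is to follow the contraction scheme already used for $V$ in Lemma \ref{th1}, but the product–quotient structure of $W$ forces a more delicate treatment of the ratio $\chi_2(\eta,u_2,\xi)/\chi_2(\infty,u_2,\xi)$. The self-map property is checked directly from the boundary values: since $\chi_2(\xi,u_2,\xi)=0$ and $\Phi_2(\xi,u_2,\xi)=0$ we get $W(u_2)(\xi)=0$, while $\chi_2(\eta,u_2,\xi)/\chi_2(\infty,u_2,\xi)\to 1$ as $\eta\to\infty$ gives $W(u_2)(\infty)=-1$; boundedness and continuity come from the estimates of Lemma \ref{lem1}. For the contraction bound I would write $C(u_2):=-1+\Phi_2(\infty,u_2,\xi)$ and $F(u_2)(\eta):=\chi_2(\eta,u_2,\xi)/\chi_2(\infty,u_2,\xi)$, so that $W(u_2)=C(u_2)F(u_2)-\Phi_2(\cdot,u_2,\xi)$, and then split
\begin{align*}
W(u_2)(\eta)-W(u_2^*)(\eta)&=C(u_2)\big(F(u_2)(\eta)-F(u_2^*)(\eta)\big)+\big(C(u_2)-C(u_2^*)\big)F(u_2^*)(\eta)\\
&\quad-\big(\Phi_2(\eta,u_2,\xi)-\Phi_2(\eta,u_2^*,\xi)\big).
\end{align*}

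The last two terms are the easy ones. By Lemma \ref{lem3} (noting that $\widetilde\Phi_2$ is independent of $\eta$) one has $|C(u_2)-C(u_2^*)|\le\widetilde\Phi_2(\infty,\xi)\|u_2-u_2^*\|$ and $|\Phi_2(\eta,u_2,\xi)-\Phi_2(\eta,u_2^*,\xi)|\le\widetilde\Phi_2(\infty,\xi)\|u_2-u_2^*\|$, while $|F(u_2^*)|\le 1$ because $\chi_2(\cdot,\cdot,\xi)$ is increasing in its first argument. Since $\widetilde\Phi_2(\infty,\cdot)$ is decreasing in $\xi$ and $\xi>\alpha_0$, these two contributions together are bounded by $2\widetilde\Phi_2(\infty,\alpha_0)\|u_2-u_2^*\|$, which is the first term of \eqref{epsilon2}.

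The heart of the argument is the first term. I would use $|C(u_2)|\le 1+\Phi_2(\infty,u_2,\xi)\le 1+\tfrac{k^2K_M}{16a^2\pi^2 L_m}\tfrac{1}{\xi^2}$ from \eqref{61}, and estimate the quotient difference through the identity
$$F(u_2)-F(u_2^*)=\frac{\chi_2(\eta,u_2,\xi)\big(\chi_2(\infty,u_2^*,\xi)-\chi_2(\infty,u_2,\xi)\big)+\chi_2(\infty,u_2,\xi)\big(\chi_2(\eta,u_2,\xi)-\chi_2(\eta,u_2^*,\xi)\big)}{\chi_2(\infty,u_2,\xi)\,\chi_2(\infty,u_2^*,\xi)},$$
whence, using $\chi_2(\eta,\cdot,\xi)\le\chi_2(\infty,\cdot,\xi)$,
$$|F(u_2)(\eta)-F(u_2^*)(\eta)|\le\frac{|\chi_2(\infty,u_2,\xi)-\chi_2(\infty,u_2^*,\xi)|+|\chi_2(\eta,u_2,\xi)-\chi_2(\eta,u_2^*,\xi)|}{\chi_2(\infty,u_2^*,\xi)}.$$
The numerator is controlled by Lemma \ref{lem2} together with $\widetilde\chi_2(\eta,\xi)\le\widetilde\chi_2(\infty,\xi)$; the denominator is bounded below, uniformly in $u_2^*\in\mathcal K$, by the lower estimate \eqref{59} of Lemma \ref{lem1}, which upon evaluating $h(\infty,N_M,L_m,\xi)$ yields $\chi_2(\infty,u_2^*,\xi)\ge\tfrac{1-\mathcal G(a\xi\sqrt{N_M/L_m})}{L_M\xi}$. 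This is exactly where $\mathcal G$ and the factor $1-\mathcal G(a\xi\sqrt{N_M/L_m})$ in the denominator of \eqref{epsilon2} originate; its positivity for finite $\xi$ is guaranteed by $\mathcal G(x)<1$. Dividing the $\mathcal G(a\xi\sqrt{N_m/L_M})$-bearing quantity $\widetilde\chi_2(\infty,\xi)$ by this lower bound, the $1/\xi$ factors cancel and produce precisely the middle fraction of $\varepsilon_2$, the factor $1+\tfrac{k^2K_M}{16a^2\pi^2 L_m}\tfrac{1}{\alpha_0^2}$ coming from $|C(u_2)|$ after using $\xi>\alpha_0$. Collecting the three pieces gives $\|W(u_2)-W(u_2^*)\|\le\varepsilon_2(\xi)\|u_2-u_2^*\|$.

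To finish, I would verify that $\mathcal G(x)=\sqrt\pi\,x e^{x^2}(1-\erf(x))=\int_0^\infty e^{-u}e^{-u^2/(4x^2)}\,du$ increases from $0$ to $1$ on $(0,\infty)$; hence the numerator of the middle term of $\varepsilon_2$ increases while its denominator $1-\mathcal G(a\xi\sqrt{N_M/L_m})$ decreases to $0$ as $\xi\to\infty$, so $\varepsilon_2$ is continuous and strictly increasing from $\varepsilon_2(\alpha_0)$ to $+\infty$. Under \eqref{condepsilon2} we have $\varepsilon_2(\alpha_0)<1$, so there is a unique $\overline\xi_2>\alpha_0$ with $\varepsilon_2(\overline\xi_2)=1$, and $\varepsilon_2(\xi)<1$ for every $\alpha_0<\xi<\overline\xi_2$, making $W$ a contraction on $\mathcal K$. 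I expect the main obstacle to be the quotient estimate: obtaining a Lipschitz bound for $F$ uniform over $\mathcal K$ hinges on the sharp lower bound $\chi_2(\infty,u_2^*,\xi)\ge\tfrac{1-\mathcal G(a\xi\sqrt{N_M/L_m})}{L_M\xi}$ and on keeping that denominator away from degeneracy.
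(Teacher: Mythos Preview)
Your proof is correct and follows essentially the same route as the paper: the paper also splits $W(u_2)-W(u_2^*)$ into the $\Phi_2(\eta,\cdot,\xi)$ difference, the quotient difference $\chi_2(\eta,\cdot,\xi)/\chi_2(\infty,\cdot,\xi)$, and the cross term $\Phi_2(\infty,\cdot,\xi)\cdot\chi_2(\eta,\cdot,\xi)/\chi_2(\infty,\cdot,\xi)$, then uses the lower bound $\chi_2(\infty,u_2^*,\xi)\ge\tfrac{1-\mathcal G(a\xi\sqrt{N_M/L_m})}{L_M\xi}$ together with $\widetilde\chi_2(\infty,\xi)$ and $\widetilde\Phi_2(\infty,\alpha_0)$ to reach $\varepsilon_2(\xi)$. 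Your only cosmetic difference is bundling $-1+\Phi_2(\infty,\cdot,\xi)$ into a single coefficient $C$ rather than treating the $-1$ and the $\Phi_2(\infty,\cdot,\xi)$ parts separately, and your integral representation $\mathcal G(x)=\int_0^\infty e^{-u-u^2/(4x^2)}\,du$ is a clean justification of the monotonicity and the bound $\mathcal G<1$ that the paper merely asserts.
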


\begin{proof}
Notice that for $\eta\geq \xi$ we have 
\begin{equation}
\begin{array}{l}
\Bigg|W(u_2)(\eta)-W(u_2^*)(\eta)\Bigg|\leq  \bigg|\Phi_2(\eta,u_2^*,\xi)-\Phi_2(\eta,u_2,\xi)\bigg|+\bigg|\dfrac{\chi_2(\eta,u_2^*,\xi)}{\chi_2(+\infty,u_2^*,\xi)}-\dfrac{\chi_2(\eta,u_2,\xi)}{\chi_2(+\infty,u_2,\xi)}\bigg|\\ \\
+\bigg|\Phi_2(+\infty,u_2,\xi)\dfrac{\chi_2(\eta,u_2,\xi)}{\chi_2(+\infty,u_2,\xi)}-\Phi_2(+\infty,u_2^*,\xi)\dfrac{\chi_2(\eta,u_2^*,\xi)}{\chi_2(+\infty,u_2^*,\xi)}\bigg|:= T_1+T_2+T_3
\end{array}
\end{equation}
where
$$T_1\leq\widetilde{\Phi}_2(\eta,\xi)||u_2-u_2^*||\leq\widetilde{\Phi}_2(+\infty,\alpha_0)||u_2-u_2^*||,$$
\vspace{0.25cm}
$$T_2\leq \dfrac{|\chi_2(\eta,u_2^*,\xi)-\chi_2(\eta,u_2,\xi)|}{|\chi_2(+\infty,u_2^*,\xi)|}+\dfrac{|\chi_2(+\infty,u_2,\xi)-\chi_2(+\infty,u_2^*,\xi)|}{|\chi_2(+\infty,u_2^*,\xi)||\chi_2(+\infty,u_2,\xi)|}|\chi_2(\eta,u_2,\xi)|$$
$$\leq \tfrac{ 2 L_M\sqrt{L_m}}{a \sqrt{N_M}}\tfrac{\widetilde{\chi}_2 (+\infty,\xi) \exp\bigg(-a^2\tfrac{N_M}{L_m}\xi^2\bigg)}{h(\infty,N_M,L_m,\xi) } ||u_2-u_2^*|| \leq \tfrac{2L_M\left[\mathcal{G}\left( a\xi \tfrac{\sqrt{N_m}}{\sqrt{L_M}}\right) \frac{L_M^2}{2L_m^2N_m} +\widetilde{L}_2\right]}{L_m^2 \left[ 1-\mathcal{G}\left( a\xi \tfrac{\sqrt{N_M}}{\sqrt{L_m}}\right)\right]}||u_2-u_2^*||$$
where $h$, $\widetilde{\chi}_2$ and $\widetilde{\Phi}_2$ are given by \eqref{h}, \eqref{tildechi2} and \eqref{phi2tilde}, respectively. In addition,
$$T_3\leq \bigg|\Phi_2(+\infty,u_2,\xi)-\Phi_2(+\infty,u_2^*,\xi)\bigg| \dfrac{\chi_2(\eta,u_2,\xi)}{\chi_2(+\infty,u_2,\xi)}+\Phi_2(+\infty,u_2^*,\xi)\bigg|\tfrac{\chi_2(\eta,u_2,\xi)}{\chi_2(+\infty,u_2,\xi)}-\tfrac{\chi_2(\eta,u_2^*,\xi)}{\chi_2(+\infty,u_2^*,\xi)}\bigg|$$
$$\leq \left(\widetilde{\Phi}_2(+\infty,\alpha_0)+\tfrac{k^2K_M}{16a^2\pi^{2}L_m\alpha_0^2}\tfrac{2L_M\left[\mathcal{G}\left( a\xi \tfrac{\sqrt{N_m}}{\sqrt{L_M}}\right) \frac{L_M^2}{2L_m^2N_m} +\widetilde{L}_2\right]}{L_m^2 \left[ 1-\mathcal{G}\left( a\xi \tfrac{\sqrt{N_M}}{\sqrt{L_m}}\right)\right]} \right)||u_2-u_2^*||.$$

Finally, we get
$$||W(u_2)-W(u_2^*)||\leq\varepsilon_2(\xi)||u_2-u_2^*||.$$

Taking into account that $\mathcal{G}$ is an increasing function that satisfies $0\leq \mathcal{G}(x)<1$, we obtain that $\varepsilon_2$ is an increasing function that satisfies $\varepsilon_2(+\infty)=+\infty$. 
If $\varepsilon_2(\alpha_0)<1$, there exists $\overline{\xi}_2> \alpha_0$ such that $\varepsilon_2(\overline{\xi}_2)=1$. Therefore,  $W$ is a contraction operator for $\alpha_0<\xi<\overline{\xi}_2$.
\end{proof}

\begin{theorem}\label{th2-a}
Suppose \eqref{44}-\eqref{55} and \eqref{condepsilon2} hold. Then, for each $\alpha_0<\xi<\Bar{\xi}_2$ there exists a unique solution $u_2\in \mathcal{K}$ to integral equation \eqref{34}.
\end{theorem}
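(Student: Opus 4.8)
The plan is to deduce this theorem as an immediate corollary of Lemma \ref{th2}, via the Banach fixed-point theorem, in exact parallel with the way Theorem \ref{th1-a} was obtained from Lemma \ref{th1}. The pivotal observation is that a function $u_2\in\mathcal{K}$ solves the integral equation \eqref{34} if and only if it is a fixed point of the operator $W$ defined in \eqref{41}: the right-hand side of \eqref{41} reproduces the right-hand side of \eqref{34} verbatim, so the fixed-point equation $W(u_2)=u_2$ and equation \eqref{34} are literally the same condition on $u_2$.

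First I would record the ambient functional-analytic setting. As noted when $\mathcal{K}$ was introduced, it is a closed subset of the Banach space of bounded continuous real-valued functions on $[\xi,\infty)$ endowed with the supremum norm; being a closed subset of a complete space, $\mathcal{K}$ is itself a complete metric space, which is precisely what the Banach fixed-point theorem requires. I would then invoke Lemma \ref{th2}: under the standing hypotheses \eqref{44}--\eqref{55} together with \eqref{condepsilon2}, for every $\xi$ with $\alpha_0<\xi<\overline{\xi}_2$ the self-map $W:\mathcal{K}\to\mathcal{K}$ is a contraction with constant $\varepsilon_2(\xi)<1$. Applying the contraction mapping principle yields a unique fixed point $u_2\in\mathcal{K}$ of $W$, which by the equivalence above is the unique solution of \eqref{34}.

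There is no substantive obstacle remaining at this stage, since all of the analytic difficulty has already been absorbed into the earlier results: the Lipschitz estimates of Lemmas \ref{lem1}--\ref{lem3} feed the contraction bound, and Lemma \ref{th2} assembles these into the statement that $W$ is a contraction on $\mathcal{K}$. The only point I would verify explicitly is that $W$ genuinely takes values in $\mathcal{K}$, i.e.\ that $W(u_2)(\xi)=0$ and $W(u_2)(+\infty)=-1$. Both follow directly from the endpoint behaviour of $\chi_2$ and $\Phi_2$: at $\eta=\xi$ the integrals defining $\chi_2(\xi,u_2,\xi)$ and $\Phi_2(\xi,u_2,\xi)$ vanish, giving $W(u_2)(\xi)=0$, while as $\eta\to+\infty$ the quotient $\chi_2(\eta,u_2,\xi)/\chi_2(+\infty,u_2,\xi)\to1$, so the two $\Phi_2(+\infty,u_2,\xi)$ contributions cancel and $W(u_2)(+\infty)=-1$. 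With this self-map property confirmed, the theorem follows immediately from Lemma \ref{th2}.
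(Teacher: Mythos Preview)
Your proposal is correct and follows exactly the paper's approach: the paper's proof is the single line ``It follows immediately from Lemma \ref{th2},'' and your argument simply unpacks this by spelling out that $\mathcal{K}$ is complete, that $W$ is a contraction by Lemma \ref{th2}, and that fixed points of $W$ coincide with solutions of \eqref{34}. Your additional verification that $W(\mathcal{K})\subset\mathcal{K}$ via the endpoint values is a welcome bit of care that the paper leaves implicit.
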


\begin{proof}
It follows immediately from Lemma \ref{th2}.
\end{proof}

According to Theorems \ref{th1-a} and \ref{th2-a}, for each $\alpha_0<\xi<\hat{\xi}:=\min(\overline{\xi}_1,\overline{\xi}_2)$ we have unique solutions $u_1=u_1(\xi)$ and $u_2=u_2(\xi)$ to the equations \eqref{33} and \eqref{34}, respectively.

It remains to be proved that there exists a solution $\xi\in (\alpha_0,\hat{\xi})$ to the equation \eqref{39}.

Let us define the following function 
\begin{equation}
  \mathcal{Z}(\xi):=    \frac{-1+\Phi_2(\infty,u_2(\xi),\xi)}{\chi_2(\infty,u_2(\xi),\xi)}+Q(\xi,u_1(\xi),\alpha_0)E_1(\xi,u_1(\xi),\alpha_0).
\end{equation}

\begin{theorem} \label{existencia-xi}
Assume that \eqref{44}-\eqref{55} and \eqref{condepsilon2} hold. Then, for each $\alpha_0<\xi<\hat{\xi}$ we have that
\begin{equation}
    \mathcal{Z}_2(\xi)\leq \mathcal{Z}(\xi)\leq \mathcal{Z}_1(\xi)
\end{equation}
where
\begin{equation}
    \begin{array}{ll}
    \mathcal{Z}_1(\xi):= \alpha_0^2 P^*+\frac{K_M k^2}{16 a^2 \pi^2 L_m} \Bigg(\frac{1}{\alpha_0}+\dfrac{L_M}{\xi\; \left(1-\mathcal{G}\left(a\hat{\xi} \;\tfrac{\sqrt{N_M}}{ \sqrt{L_m}} \right) \right)}\Bigg),\\ \\
     \mathcal{Z}_2(\xi):= \alpha_0 P^*\exp\left( a^2\tfrac{N_M}{L_m}(\xi^2-\alpha_0^2)\right)-\dfrac{L_M \;\xi}{1-\mathcal{G}\left(a\xi \;\tfrac{\sqrt{N_M}}{ \sqrt{L_m}} \right) }.
    \end{array}
\end{equation}
Moreover, if $\mathcal{Z}_1(\hat{\xi})\leq M \hat{\xi}^3$ and   $\mathcal{Z}_2(\alpha_0)\geq M \alpha_0^3$, this is 
\begin{equation}\label{cota-Z1-xiraya}
     \alpha_0^2 P^*+\frac{K_M k^2}{16 a^2 \pi^2 L_m} \Bigg(\frac{1}{\alpha_0}+\frac{L_M}{\hat{\xi} \left(1-\mathcal{G}\left(a\hat{\xi} \;\tfrac{\sqrt{N_M}}{ \sqrt{L_m}} \right) \right)}\Bigg) \leq M \hat{\xi}^3
\end{equation}
and
\begin{equation}\label{cota-Z2-alpha0}
    \alpha_0 P^*-\frac{L_M \alpha_0}{1-\mathcal{G}\left(a\alpha_0 \;\tfrac{\sqrt{N_M}}{ \sqrt{L_m}} \right) }\geq M \alpha_0^3
\end{equation}
there exists at least one solution $\xi^*\in (\alpha_0,\hat{\xi})$   to the equation \eqref{39}.
\end{theorem}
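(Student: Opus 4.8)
The plan is to observe that the left-hand side of \eqref{39} is precisely $\mathcal{Z}(\xi)$, so that solving \eqref{39} amounts to finding a zero of the continuous function $F(\xi):=\mathcal{Z}(\xi)-M\xi^3$ on $(\alpha_0,\hat{\xi})$. The argument proceeds in three stages: first establish the two-sided estimate $\mathcal{Z}_2(\xi)\le \mathcal{Z}(\xi)\le \mathcal{Z}_1(\xi)$; second, prove that $\xi\mapsto\mathcal{Z}(\xi)$ is continuous; and third, invoke Bolzano's theorem using the sign information carried by \eqref{cota-Z1-xiraya} and \eqref{cota-Z2-alpha0}.

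For the bounds I would treat the two summands of $\mathcal{Z}(\xi)$ separately. For the quotient $\frac{-1+\Phi_2(\infty,u_2,\xi)}{\chi_2(\infty,u_2,\xi)}$ I would use that $\chi_2(\infty,u_2,\xi)>0$ together with $0\le \Phi_2(\infty,u_2,\xi)\le \frac{k^2K_M}{16a^2\pi^2L_m}\frac{1}{\xi^2}$ from \eqref{61}. Passing to the limit $\eta\to\infty$ in \eqref{59} and rewriting $h(\infty,N_M,L_m,\xi)$ through the definition of $\mathcal{G}$ gives the clean lower bound $\chi_2(\infty,u_2,\xi)\ge \frac{1-\mathcal{G}\left(a\xi\sqrt{N_M/L_m}\right)}{L_M\xi}$, hence $\frac{1}{\chi_2(\infty,u_2,\xi)}\le \frac{L_M\xi}{1-\mathcal{G}\left(a\xi\sqrt{N_M/L_m}\right)}$. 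Dropping the $-1$ in the numerator yields the upper bound for this term and keeping only $-1$ yields the lower bound. For $Q(\xi,u_1,\alpha_0)E_1(\xi,u_1,\alpha_0)$ I would combine $\alpha_0^2P^*\le Q\le \alpha_0^2P^*+\frac{k^2K_M}{16a^2\pi^2L_m}\left(\frac1{\alpha_0}-\frac1\xi\right)$, which follows from \eqref{Q} and \eqref{52}, with $\exp\left(-a^2\frac{N_M}{L_m}(\xi^2-\alpha_0^2)\right)\le E_1(\xi,u_1,\alpha_0)\le 1$ from \eqref{56}. Finally, since $\mathcal{G}$ is increasing and $\xi<\hat{\xi}$, replacing $\xi$ by $\hat{\xi}$ inside $\mathcal{G}$ in the upper estimate produces exactly $\mathcal{Z}_1(\xi)$, while the lower estimate produces $\mathcal{Z}_2(\xi)$.

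The continuity of $\mathcal{Z}$ is the delicate step, and I expect it to be the main obstacle. The value $\mathcal{Z}(\xi)$ depends on $\xi$ both explicitly and through the fixed points $u_1=u_1(\xi)$ and $u_2=u_2(\xi)$ produced by Theorems \ref{th1-a} and \ref{th2-a}, whose very domains $[\alpha_0,\xi]$ and $[\xi,\infty)$ move with $\xi$. The plan is to recast each fixed-point problem on a $\xi$-independent space, rescaling $[\alpha_0,\xi]$ onto $[0,1]$ (and $[\xi,\infty)$ onto a fixed half-line), so that $V$ and $W$ become operators depending continuously on the parameter $\xi$ with contraction constants bounded uniformly away from $1$ on compact subsets of $(\alpha_0,\hat{\xi})$. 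The uniform contraction principle then yields that $\xi\mapsto u_1(\xi)$ and $\xi\mapsto u_2(\xi)$ are continuous; since the kernels $E_1$, $\chi_2$, $\Phi_2$ and the functional $Q$ depend continuously on $(\xi,u_1)$ and $(\xi,u_2)$, continuity of $\mathcal{Z}$ on $(\alpha_0,\hat{\xi})$ follows.

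With continuity in hand I would close the argument by Bolzano's theorem applied to $F$. From $\mathcal{Z}_2(\xi)\le \mathcal{Z}(\xi)$ and the continuity of $\mathcal{Z}_2$, hypothesis \eqref{cota-Z2-alpha0} gives $F(\xi)\ge \mathcal{Z}_2(\xi)-M\xi^3\ge 0$ for $\xi$ close to $\alpha_0$; from $\mathcal{Z}(\xi)\le \mathcal{Z}_1(\xi)$ and hypothesis \eqref{cota-Z1-xiraya}, together with the monotonicity of $\xi\mapsto\mathcal{Z}_1(\xi)-M\xi^3$, one gets $F(\xi)\le \mathcal{Z}_1(\xi)-M\xi^3\le 0$ for $\xi$ close to $\hat{\xi}$. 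Because $F$ changes sign on $(\alpha_0,\hat{\xi})$, the intermediate value theorem provides at least one $\xi^*\in(\alpha_0,\hat{\xi})$ with $F(\xi^*)=0$, that is, a solution of \eqref{39}. The only point needing care here is that the interval is open: one either uses the strict versions of the endpoint inequalities or passes to one-sided limits, using the uniform bounds to guarantee that the sign of $F$ is inherited as $\xi\to\alpha_0^+$ and $\xi\to\hat{\xi}^-$.
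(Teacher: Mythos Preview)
Your approach coincides with the paper's: both proofs bound the two summands of $\mathcal{Z}(\xi)$ separately, using \eqref{56} and \eqref{Q} for the term $Q(\xi,u_1,\alpha_0)E_1(\xi,u_1,\alpha_0)$ and \eqref{59}--\eqref{61} (rewritten via the function $\mathcal{G}$) for the quotient $\dfrac{-1+\Phi_2(\infty,u_2,\xi)}{\chi_2(\infty,u_2,\xi)}$, and then invoke an intermediate-value argument to locate $\xi^*$. The only substantive difference is that you devote an explicit step to the continuity of $\xi\mapsto\mathcal{Z}(\xi)$, handling the moving domains of $u_1(\xi)$ and $u_2(\xi)$ via rescaling and the uniform contraction principle; the paper simply asserts the conclusion after noting that $\mathcal{Z}_1$ and $\mathcal{Z}_2$ are decreasing, leaving the continuity of $\mathcal{Z}$ implicit. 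Your version is therefore the more complete of the two, while the underlying strategy is the same.
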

\begin{proof}
From the definition of $Q$ given by \eqref{Q} and the bounds for $E_1$ we obtain that
\begin{equation}\label{cotasup-term1}
  Q(\xi,u_1,\alpha_0)E_1(\xi,u_1,\alpha_0)\leq \left(\alpha_0^2 P^*+\frac{K_M k^2}{16a^2 \pi^2 L_m} \right) \exp\left(-a^2 \tfrac{N_m}{L_M}(\xi^2-\alpha_0^2) \right)
\end{equation}
and
\begin{equation}\label{cotainf-term1}
   Q(\xi,u_1,\alpha_0)E_1(\xi,u_1,\alpha_0)\geq    \alpha_0^2 P^*   \exp\left(-a^2 \tfrac{N_m}{L_M}(\xi^2-\alpha_0^2) \right).
\end{equation}

In addition, according to \eqref{59} and \eqref{61} we have  
\begin{equation}\label{cotasup-term2}
\dfrac{-1+\Phi_2(\infty,u_2,\xi)}{\chi_2(\infty,u_2,\xi)}\leq \dfrac{\Phi_2(\infty,u_2,\xi)}{\chi_2(\infty,u_2,\xi)}
\leq \dfrac{K_M k^2 }{16a^2 \pi^2 L_m} \dfrac{L_M}{\xi \left(1-\mathcal{G}\left(a\hat{\xi} \;\tfrac{\sqrt{N_M}}{ \sqrt{L_m}} \right) \right)}.
\end{equation}
Moreover,
\begin{equation}\label{cotasinf-term2}
\begin{array}{ll}
\dfrac{-1+\Phi_2(\infty,u_2,\xi)}{\chi_2(\infty,u_2,\xi)}\geq \dfrac{-1}{\chi_2(\infty,u_2,\xi)}\geq \dfrac{-L_M \xi}{1-\mathcal{G}\left(a\xi \;\tfrac{\sqrt{N_M}}{ \sqrt{L_m}} \right)}
\end{array}
\end{equation}
Therefore, from  \eqref{cotasup-term1}, \eqref{cotainf-term1}, \eqref{cotasup-term2} and \eqref{cotasinf-term2} we get that the function $\mathcal{Z}$ is bounded by the functions  $\mathcal{Z}_1$ and $\mathcal{Z}_2$, which are independent of  $u_1(\xi)$ and $u_2(\xi)$.

Finally, taking into account that $\mathcal{Z}_1$ and $\mathcal{Z}_2$ are decreasing functions, if we assume that $\mathcal{Z}_1(\hat{\xi})\leq M \hat{\xi}^3$ and   $\mathcal{Z}_2(\alpha_0)\geq M \alpha_0^3$ we can guarantee that there exists at least one solution $\xi^*\in (\alpha_0,\hat{\xi})$ to the equation \eqref{39}.
\end{proof}

\begin{theorem} Suppose \eqref{alpha}, \eqref{condP}, \eqref{alfacero}, \eqref{44}-  \eqref{55}, \eqref{condepsilon2}, \eqref{cota-Z1-xiraya} and \eqref{cota-Z2-alpha0} hold 
then there exists at least one  solution $(u_1^*,u_2^*,\xi^*)$ with $\alpha_0<\xi^*<\hat{\xi}$ to the problem \eqref{33}-\eqref{39}, where $\alpha_0$ is given by \eqref{alfacero}.

\end{theorem}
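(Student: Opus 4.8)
The plan is to assemble the three preceding results into a single existence statement for the full coupled system, so that essentially nothing new has to be computed. First I would fix an arbitrary $\xi \in (\alpha_0,\hat{\xi})$, where $\hat{\xi}=\min(\overline{\xi}_1,\overline{\xi}_2)$, and invoke Theorems \ref{th1-a} and \ref{th2-a}: under hypotheses \eqref{44}--\eqref{55} together with \eqref{condepsilon2}, the operators $V$ and $W$ are contractions on the respective complete spaces, so Banach's fixed point theorem furnishes unique $u_1=u_1(\xi)\in C^0[\alpha_0,\xi]$ solving \eqref{33} and unique $u_2=u_2(\xi)\in\mathcal{K}$ solving \eqref{34}. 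This step reduces the three-unknown problem \eqref{33}--\eqref{39} to the single scalar equation \eqref{39} in the unknown $\xi$, since once $\xi$ is chosen the pair $(u_1,u_2)$ is determined.

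Next I would feed these parametrized fixed points into the coupling condition \eqref{39}, which is exactly the statement $\mathcal{Z}(\xi)=M\xi^3$. Here Theorem \ref{existencia-xi} does the work: it provides the $u_i$-independent envelope $\mathcal{Z}_2(\xi)\le \mathcal{Z}(\xi)\le \mathcal{Z}_1(\xi)$ and shows that the extra hypotheses \eqref{cota-Z1-xiraya} and \eqref{cota-Z2-alpha0} are precisely the inequalities $\mathcal{Z}_1(\hat{\xi})\le M\hat{\xi}^3$ and $\mathcal{Z}_2(\alpha_0)\ge M\alpha_0^3$. Setting $g(\xi):=\mathcal{Z}(\xi)-M\xi^3$, one then has $g(\alpha_0)\ge \mathcal{Z}_2(\alpha_0)-M\alpha_0^3\ge 0$ and $g(\hat{\xi})\le \mathcal{Z}_1(\hat{\xi})-M\hat{\xi}^3\le 0$, so an intermediate value argument yields at least one root $\xi^*\in(\alpha_0,\hat{\xi})$ of \eqref{39}. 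Since Theorem \ref{existencia-xi} is already established, I would simply cite it to obtain $\xi^*$.

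Finally I would close the loop by defining $u_1^*:=u_1(\xi^*)$ and $u_2^*:=u_2(\xi^*)$. By construction $u_1^*$ and $u_2^*$ solve \eqref{33} and \eqref{34} for the value $\xi=\xi^*$, while $\xi^*$ solves \eqref{39}; hence the triple $(u_1^*,u_2^*,\xi^*)$ with $\alpha_0<\xi^*<\hat{\xi}$ satisfies the entire system \eqref{33}--\eqref{39}, which is the assertion. The remaining verification is purely bookkeeping, namely checking that the compatibility region where \eqref{condepsilon2}, \eqref{cota-Z1-xiraya} and \eqref{cota-Z2-alpha0} all hold is nonempty for the given data.

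The genuine obstacle in this argument is not the assembly itself but the continuity of the map $\xi\mapsto(u_1(\xi),u_2(\xi))$, which is what makes $\mathcal{Z}$ continuous and legitimizes the intermediate value step; the delicacy is that the domain $[\alpha_0,\xi]$ of $u_1$ varies with $\xi$ and that $\xi$ appears as the lower limit in every kernel entering $\chi_2,\Phi_2,E_2$, so the fixed points depend on the parameter in a nontrivial way even though each is unique. In the present write-up that difficulty is entirely absorbed into Theorem \ref{existencia-xi}, which I am entitled to assume, so the final theorem follows by direct combination of Theorems \ref{th1-a}, \ref{th2-a} and \ref{existencia-xi}.
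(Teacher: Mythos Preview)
Your proposal is correct and follows exactly the same approach as the paper, which proves the theorem in one line: ``It follows immediately from Theorems \ref{th1-a}, \ref{th2-a} and \ref{existencia-xi}.'' Your write-up simply unpacks this citation into its three constituent steps, and your remark about the continuity of $\xi\mapsto\mathcal{Z}(\xi)$ being the real subtlety (tacitly absorbed into Theorem \ref{existencia-xi}) is a fair observation that the paper itself does not make explicit.
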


\begin{proof}
It follows immediately from Theorems \ref{th1-a}, \ref{th2-a} and \ref{existencia-xi} . 
\end{proof}

Finally, we can enunciate the following theorem:
\begin{theorem} {\color{blue} If} \eqref{condP}, \eqref{alfacero}, \eqref{44}-\eqref{55}, \eqref{condepsilon2}, \eqref{cota-Z1-xiraya} and \eqref{cota-Z2-alpha0} hold 
there exists at least one  solution $(\theta_1^*(r,t),\theta_2^*(r,t),\alpha(t),\beta(t))$ to the problem \eqref{6}-\eqref{13}, where 
$$\theta_1^{*}(r,t)=\theta_mu_1^*\left(\frac{r}{2a\sqrt{t}}\right)+\theta_m, \quad \alpha(t)\leq r\leq\beta(t), \quad t>0$$.
$$\theta_2^{*}(r,t)=\theta_mu_2^*\left(\frac{r}{2a\sqrt{t}}\right)+\theta_m, \quad r\geq\beta(t), \quad t>0$$
$$\alpha(t)=2a\alpha_0\sqrt{t},\quad t>0\quad \textit{with}\quad \alpha_0=\frac{2\sqrt{\pi}\lambda_b(\theta_{ion}-\theta_b)}{P-\sqrt{P^{2}-8a^{2}L_b\gamma_b\lambda_b\pi(\theta_{ion}-\theta_b)}}$$
and 
$$\beta(t)=2a\xi^{*}\sqrt{t}, \quad t>0\quad \textit{where}\quad \xi^{*}\quad \textit{ is a solution of} \quad \eqref{39}$$
\end{theorem}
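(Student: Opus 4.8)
The plan is to run the reduction of Section 2 in reverse. By the preceding theorem we already dispose, for the given data, of a triple $(u_1^*,u_2^*,\xi^*)$ with $\alpha_0<\xi^*<\hat{\xi}$ solving the integral system \eqref{33}--\eqref{39}. What remains is to promote this triple to a classical solution of the original free boundary problem \eqref{6}--\eqref{13} by undoing, successively, the passage to integral equations, the similarity change \eqref{23}, and the dimensionless change \eqref{14}. The free boundaries are then read off directly: $\alpha(t)=2a\alpha_0\sqrt{t}$ with $\alpha_0$ as in \eqref{alfacero} (which solves the vapor-zone balance \eqref{3} under \eqref{condP}) and $\beta(t)=2a\xi^*\sqrt{t}$ from \eqref{24}.

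First I would upgrade the regularity of the fixed points. Although $u_1^*$ and $u_2^*$ are obtained merely as continuous fixed points, the right-hand sides of \eqref{33} and \eqref{34} are integrals of continuous integrands, by the continuity and the global bounds \eqref{44}--\eqref{53} on $L_i^*,N_i^*,K_i^*$, so the fundamental theorem of calculus yields $u_i^*\in C^1$; differentiating once more and noting that the resulting expressions stay continuous gives $u_i^*\in C^2$ on the respective intervals. With this regularity secured I would invoke the equivalence established in Section 2 through the substitution \eqref{32}: the integral system \eqref{33}--\eqref{39} holds if and only if $(u_1^*,u_2^*,\xi^*)$ solves the ordinary differential problem \eqref{25}--\eqref{31}. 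Concretely, the flux condition \eqref{27} is encoded through the value $\nu_1(\alpha_0)=-\alpha_0^2 P^*$ appearing in $Q$ via \eqref{Q}, the contact conditions $u_1^*(\xi^*)=u_2^*(\xi^*)=0$ follow by evaluating \eqref{33}--\eqref{34} at $\eta=\xi^*$, the far-field condition $u_2^*(\infty)=-1$ follows by letting $\eta\to\infty$ in \eqref{34}, and the Stefan balance \eqref{30} is precisely \eqref{39}.

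Next I would reverse the similarity transformation. Setting $\eta=r/(2a\sqrt{t})$ and $T_i^*(r,t):=u_i^*(\eta)$, a direct computation of $\partial_t T_i^*=-\tfrac{\eta}{2t}(u_i^*)'(\eta)$ and $\partial_r T_i^*=\tfrac{1}{2a\sqrt{t}}(u_i^*)'(\eta)$, combined with the current law \eqref{jotas} and multiplication by $4a^2 t\eta^2$, carries the ordinary differential equations \eqref{25}--\eqref{26} back into the spherical heat equations \eqref{15}--\eqref{16} and the conditions \eqref{27}--\eqref{31} back into \eqref{17}--\eqref{22}; the initial and infinity conditions \eqref{21}--\eqref{22} hold because $u_i^*$ depend on $r,t$ only through $\eta$, so $\eta\to\infty$ as $t\to 0^+$ or $r\to\infty$. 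Finally I would undo \eqref{14} by setting $\theta_i^*(r,t):=\theta_m\big(T_i^*(r,t)+1\big)=\theta_m u_i^*(\eta)+\theta_m$; since the definitions of $N_i,L_i,K_i$ make \eqref{15}--\eqref{22} equivalent to \eqref{6}--\eqref{13}, the quadruple $(\theta_1^*,\theta_2^*,\alpha,\beta)$ solves the original problem, which establishes the claim.

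The hard part, and the only place demanding genuine care, is the regularity bootstrap in the first step: one must verify that each successive differentiation of the integral operators keeps the integrand continuous, using the uniform bounds and the Lipschitz hypotheses \eqref{44}--\eqref{55}, together with the exponential decay \eqref{53} to control the improper integral at infinity for $u_2^*$, so that the $u_i^*$ are genuinely of class $C^2$ and the reversal to the classical problem \eqref{25}--\eqref{31} is legitimate rather than merely formal. Once this is in place, the two remaining reversals are elementary chain-rule computations, and existence follows at once from the preceding theorem.
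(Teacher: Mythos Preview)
Your proposal is correct and follows the same route the paper takes, namely reversing the chain of reductions from Section~2 (integral system $\Rightarrow$ ODE problem \eqref{25}--\eqref{31} $\Rightarrow$ dimensionless problem \eqref{15}--\eqref{22} $\Rightarrow$ original problem \eqref{6}--\eqref{13}); the paper in fact states this theorem without proof, treating it as an immediate consequence of the preceding theorem and the construction. Your explicit attention to the $C^2$ regularity bootstrap is more careful than the paper, which simply asserts the equivalence of the integral system and the ODE problem via the substitution \eqref{32}.
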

\section{Conclusion}
In conclusion, this paper has addressed the problem of vaporization and melting in closed electrical contacts. The significance of this problem for engineers working with electrical contact materials has been highlighted. The solution approach involves solving the spherical heat equation, taking into account the temperature-dependent thermal coefficients and the effect of Joule heating.

The mathematical model developed in this study provides a comprehensive description of the vaporization and melting process in closed electrical contacts. By considering three zones and incorporating temperature-dependent thermal coefficients and Joule heating, the model captures the essential factors influencing the behavior of the materials.

Our analysis has demonstrated the possibility of obtaining a solution to the problem through similarity transformation, resulting in a nonlinear integral equation. This transformation greatly simplifies the solution approach and facilitates further analysis of the problem. The relevance and applicability of this integral equation have been established.

We have successfully discussed and proved the existence and uniqueness of the solution to the integral equation obtained through similarity transformation. This finding validates the reliability and robustness of the solution method employed in this study. It provides a solid basis for obtaining meaningful results and insights into the vaporization and melting process in closed electrical contacts.

The results of this study have practical implications for engineers working with electrical contact materials. The mathematical model developed in this research can be effectively employed to analyze these materials, particularly during the initial stages of arcing where metal ignition occurs. The nonlinear model proposed in this paper proves to be highly effective in capturing the complex behavior of the system. These findings contribute to a better understanding of the phenomenon and provide valuable insights for engineering applications.

\section*{Acknowledgment}
\noindent T.A. Nauryz and S.N. Kharin were supported by grant project AP19675480 "Problems of heat conduction with a free boundary arising in modeling of switching processes in electrical devices" from Ministry of Sciences and Higher Education of the Republic of Kazakhstan. J. Bollati and A. C. Briozzo were supported by the Projects 80020210100002 and 80020210200003 from Austral University, Rosario, Argentina.

\section*{References}
 

\end{document}